\newtheorem{Exam}{Example}
\newtheorem{Lem}{Lemma}
\newtheorem{Con}{Conjecture}
\newtheorem{The}{Theorem}
\newtheorem{Pro}{Proposition}
\theoremstyle{definition}
\newtheorem{De}{Definition}
\newtheorem{Rem}{Remark}
\newcommand{\bsb}[1]{\boldsymbol{#1}}
\def\st{{\mathsf{st}}}
\def\m{{\mathsf{m}}}
\def\len{{\mathsf{len}}}
\def\asc{{\mathsf{asc}}}
\def\lv{{\mathsf{lv}}}
\def\desc{{\mathsf{des}}}
\def\lev{{\mathsf{lev}}}
\DeclareMathOperator{\suc}{succ}
\DeclareMathOperator{\tildesuc}{\widetilde{succ}}
\DeclareMathOperator{\last}{last}
\DeclareMathOperator{\first}{first}
\DeclareMathOperator{\Atilde}{\mathcal{\widetilde A}}
\DeclareMathOperator{\SEtilde}{\mathcal{\widetilde {SE}}}
\DeclareMathOperator{\Rtilde}{\mathcal{\widetilde R}}
\DeclareMathOperator{\Stilde}{\mathcal{\widetilde S}}
\DeclareMathOperator{\Xtilde}{\mathcal{\widetilde X}}
\DeclareMathOperator{\Ytilde}{\mathcal{\widetilde Y}}
\begin{document}

\title{Two Reflected Gray Code based orders \\on some restricted growth
sequences}

\author{
\begin{tabular}{cc}
Ahmad {\sc Sabri}&Vincent {\sc Vajnovszki}\\
{\small LE2I, Universit\'e de Bourgogne}&{\small LE2I, Universit\'e de Bourgogne}\\
{\small BP 47870, 21078 Dijon Cedex, France}&{\small BP 47870, 21078 Dijon Cedex, France}\\
{\small \tt ahmad.sabri@u-bourgogne.fr}&{\small \tt vvajnov@u-bourgogne.fr}\\
{\small Dept. of Informatics, Gunadarma University}&\\
{\small Depok 16424, Indonesia}&\\ 
{\small \tt sabri@staff.gunadarma.ac.id}&\\
\end{tabular}
}

\maketitle

\begin{abstract}
We consider two order relations: that induced by the $m$-ary reflected 
Gray code and a suffix partitioned variation of it.
We show that both of them when applied to some sets of restricted growth 
sequences still yield Gray codes.
These sets of sequences are: subexcedant or ascent sequences, restricted growth 
functions, and staircase words.
In each case we give efficient 
exhaustive generating algorithms and compare the obtained results.

\end{abstract}

\section{Introduction and motivations}

The term `Gray code' was taken from Frank Gray, who patented {\it
Binary Reflected Gray Code} (BRGC) in 1953 \cite{Gray}. The concept of BRGC is
extended to {\it Reflected Gray Code} (RGC), to accommodate $m$-tuples (sequence),
with $m>2$ \cite{Er}. In these Gray codes, successive sequence
differ in a single position, and by $+1$ or $-1$ in this position.
More generally, if a list of sequences is such that the Hamming distance between
successive sequences is upper bounded by a constant $d$, then the list is said 
a $d$-{\it Gray code}. So in particular, BRGC and RGC are 1-Gray codes. In
addition, if the positions where the successive sequences differ are adjacent, then we say that the list is a $d$-{\it adjacent} Gray code.

For long time, the design of Gray codes for 
combinatorial classes and their corresponding generating
algorithms was an ad-hoc task, that is, done case by case according
to the class under consideration.
Recently, general techniques which fit to
large classes of combinatorial objects were developed and used.
Among them are, for example, the ECO-method (initiated in 
Gray code context in \cite{Bern},
see also \cite{Vaj_2010}), prefix rotations
(yielding bubble languages, see \cite{Rus_Will,Rus_Saw_Will} and references therein), or
Reflected Gray Code based order relations; this last technique was
used implicitly, for example in \cite{Kling,Walsh_2000}, and developed
systematically as a general method in 
\cite{Bar_Vaj,Vaj_F,Vaj_L,Vajnov2,Vaj_2010,VajnovVernay}.
The results presented in this paper are in the light
of this last direction. More precisely, we show that
two order relations induced by Reflected Gray Code and its variation
also give Gray codes for some classes of restricted growth sequences defined by means of statistics. These classes are:
subexcedant or ascent sequences, restricted growth functions, and staircase words.
We give efficient (CAT) generating algorithm for each obtained Gray code.

\section{Preliminaries}
\label{sect:preliminaries}
\subsection{Gray code orders}

Let $G_n(m)$ be the set of length $n$ $m$-ary sequences $s_1s_2\ldots s_n$ with
$s_i\in\{0,1,\ldots, m-1\}$; clearly, $G_n(m)$ is the product set
$\{0,1,\ldots,m-1\}^n$.
The {\it Reflected Gray Code} ({\it RGC} for short) for the set
$G_n(m)$, denoted by ${\mathcal G}_n(m)$, is
the natural extension of the Binary Reflected Gray Code to this set.
The list ${\cal G}_n(m)$ is defined recursively by the following relation
\cite{Er}:
\begin{equation}
\label{eq:RGC}
{\cal G}_n(m)=\left\{ \begin {array}{ccc}
\epsilon   &  {\rm if} & n=0,  \\
0            {\cal G}_{n-1}(m),\;
1 \overline {{\cal G}_{n-1}(m)},\;
2            {\cal G}_{n-1}(m),\; \dots ,\;
(m-1)          {\cal G}_{n-1}'(m)
& {\rm if} & n>0,
\end {array}
\right.
\end{equation}
where $\epsilon$ is the empty sequence, $\overline {{\cal G}_{n-1}(m)}$ is the reverse of  ${\cal G}_{n-1}(m)$,
and ${\cal G}_{n-1}'(m)$ is ${\cal G}_{n-1}(m)$
or $\overline {{\cal G}_{n-1}(m)}$ according to $m$ is odd or even.

In ${\cal G}_n(m)$, two successive sequences differ in a single position and by
$+1$ or $-1$ in this position. A list for a set of sequences induces an order
relation to this set, and we give two order relations induced by the RGC and its
variation, namely RGC order \cite{Vaj_F} and Co-RGC order.

We adopt the convention that lower case bold letters represent tuples,
for example: $\bsb s=s_1s_2\ldots s_n$, $\bsb a=a_1a_2\ldots a_k$,
$\bsb b=b_{k+1}b_{k+2}\ldots b_n$.

\begin{De}
The {\it Reflected Gray Code order} $\prec$ on $G_n(m)$ is defined as:
$\bsb{s}=s_1s_2\ldots s_n$ is less than $\bsb{t}=t_1t_2\ldots t_n$,
denoted by $\bsb{s}\prec\bsb{t}$, if either
\begin{itemize}
\item
  $\sum_{i=1}^{k-1} s_i$ is
  even and $s_k<t_k$, or
\item
  $\sum_{i=1}^{k-1} s_i$ is
  odd and $s_k>t_k$,
\end{itemize}
where $k$ is the leftmost position where ${\bsb s}$ and ${\bsb t}$ differ.
\end{De}
It is easy to see that ${\cal G}_n(m)$ defined in relation (\ref{eq:RGC}) lists
sequences in $G_n(m)$ in $\prec$ order.

Now we give a variation of ${\cal G}_n(m)$.
Let $s_1s_2\ldots s_n$ be a sequence in 
$G_n(m)$. The complement of $s_i$, $1\leq i\leq n$,
is 
$$(m-1-s_i),$$
and the reverse of $s_1s_2\ldots s_n$ is
$$s_ns_{n-1}\ldots s_1.$$
Let ${\widetilde{\cal G}}_n(m)$ be the list obtained by transforming each sequence $\bsb s$ in ${\cal G}_n(m)$ as follows:
\begin{itemize}
\item complementing each digit in $\bsb s$ if $m$ is even, 
      or complementing only digits in odd positions if $m$ is odd, then
\item reversing the obtained sequence.
\end{itemize}
Clearly, ${\widetilde{\cal G}}_n(m)$ is also a Gray code for 
$G_n(m)$, and the sequences therein are listed in Co-Reflected Gray Code order,
as defined formally below.

\begin{De}
\label{De:CoRGC}
The {\it Co-Reflected Gray Code order} ${\prec}_c$ on $G_n(m)$ is defined
as:\\
$\bsb{s}=s_1s_2\ldots s_n$ is less than $\bsb{t}=t_1t_2\ldots t_n$, denoted by
$\bsb{s}{\prec}_c\bsb{t}$, if either
\begin{itemize}
\item $\sum_{i=k+1}^{n}s_i+(n-k)$ is even and $s_k>t_k$, or
\item $\sum_{i=k+1}^{n}s_i+(n-k)$ is odd and $s_k<t_k$,
\end{itemize}
where $k$ is the rightmost position where ${\bsb s}$ and ${\bsb t}$ differ.
\end{De}

Although this definition sounds somewhat arbitrary, as we will see in Section \ref{sec:coRGC}, it turns out that $\prec_c$ order gives suffix partitioned Gray codes for some sets of restricted growth sequences.
Obviously, the restriction of ${\cal G}_n(m)$ (resp. ${\widetilde{\cal G}}_n(m)$) to a set of sequences is simply the list of sequences in the set listed in $\prec$ (resp. $\prec_c$) order.

\subsection{Restricted growth sequences defined by means of statistics}

Through this paper we consider sequences over non-negative integers. A {\it statistic} on a set of sequences is an association of an integer to each
sequence in the set. For a sequence $s_1s_2\ldots s_n$, its length minus one, numbers of ascents/levels/descents, maximal value, and last value are classical examples of statistics. They are defined as follows, see also \cite{Man_Vaj}:

\begin{itemize}
\item{$\len(s_1s_2\ldots s_n)=n-1$};
\item{$\asc(s_1s_2\ldots s_{n})={\rm card}\{i\,|\,1\le i< n {\rm \ and\ }
s_i<s_{i+1}\}$; }
\item{$\lev(s_1s_2\ldots s_n)={\rm card}\{i\,|\,1\le i< n {\rm \ and\ }
s_i=s_{i+1}\}$; }
\item{$\desc(s_1s_2\ldots s_n)={\rm card}\{i\,|\,1\le i< n {\rm \ and\ }
s_i>s_{i+1}\}$; }
\item{$\m(s_1s_2\ldots s_n)=\max\{s_1,s_2,\ldots,s_n\}$; }
\item{$\lv(s_1s_2\ldots s_n)=s_n$. }

\end{itemize}

\noindent
If $\st$ is one of the statistics $\len$, $\asc$, $\m$, and $\lv$, then $\st$ satisfy the following:
\begin{equation}
\label{prop1}
\st(s_1s_2\ldots s_n)\leq n-1,
\end{equation}
and
\begin{equation}
\label{prop2}
{\rm if}\ s_n=\st(s_1s_2\ldots s_{n-1})+1, {\rm then}\ s_n=\st(s_1s_2\ldots s_{n-1}s_n).
\end{equation}
\noindent
On the contrary, the statistics $\lev$ and $\desc$ do not satisfy 
relation (\ref{prop2}). Accordingly, through this paper we will consider only the four statistics above.
However, as we will point out, some of the results presented here are also true for arbitrary statistics satisfying relations (\ref{prop1}) and (\ref{prop2}).

\begin{De}
\label{def:RGS}
For a given statistic $\st$, 
an $\st$-{\it restricted growth sequence} $s_1s_2\ldots s_n$ is a sequence with
$s_1=0$ and
\begin{equation}
\label{Un_Def}
0\leq s_{k+1}\leq \st(s_1s_2\ldots s_k)+1 {\rm \ for \ } 1\le k<n,
\end{equation}
and the set of $\st$-restricted growth sequences is the set
of all sequences  $s_1s_2\ldots s_n$ satisfying relation (\ref{Un_Def}).
\end{De}

From this definition, it follows that any prefix of an 
$\st$-restricted growth sequence
is also (a shorter) $\st$-restricted growth sequence.

\begin{Rem}
\label{prop}
If $\st$ is a statistic satisfying relations (\ref{prop1}) and (\ref{prop2}) above, then
\begin{enumerate}
\item $\max\{\st(s_1s_2\ldots s_n)\,|\,s_1s_2\ldots s_n {\rm \,is\ an\ }
\st$-restricted growth sequence$\}=n-1$;
\item if $s_1s_2\ldots s_n$ is an $\st$-restricted growth sequence,
then for any $k$, $1\leq k<n$,
$s_{k+1}=\st(s_1s_2\ldots s_k)+1$
implies $s_{k+1}=\st(s_1s_2\ldots s_k s_{k+1})$.

\end{enumerate}
\end{Rem}

The sets of $\st$-restricted growth sequences, where $\st$ is one of the statistics $\len$, $\asc$, $\m$, and $\lv$, are defined below.

\begin{De}
\
\begin{itemize}
\item{The set $SE_n$ of {\it subexcedant sequences} of length $n$ is defined as:
$$SE_n=\{ s_1s_2\ldots s_n \, | \,  s_1=0 {\rm \ and \ }
0\leq s_{k+1}\leq \len(s_1s_2\ldots s_k)+1 {\rm \ for \ } 1\le k<n \};$$}
\item{The set $A_n$ of {\it ascent sequences} of length $n$ is defined as:
$$A_n=\{ s_1s_2\ldots s_n \, | \,  s_1=0 {\rm \ and \ }
0\leq s_{k+1}\leq \asc(s_1s_2\ldots s_{k})+1 {\rm \ for \ }1\le k<n \};$$}
\item{The set $R_n$ of {\it restricted growth functions} of length $n$ is
defined as:
$$R_n=\{ s_1s_2\ldots s_n \, | \,  s_1=0 {\rm \ and \ }
0\leq s_{k+1}\leq \m(s_1s_2\ldots s_{k})+1 {\rm \ for \ }1\le k<n\};$$}
\item{The set $S_n$ of {\it staircase words} of length $n$ is defined as:
$$S_n=\{ s_1s_2\ldots s_n \, | \,  s_1=0 {\rm \ and \ }
0\leq s_{k+1}\leq \lv(s_1s_2\ldots s_{k})+1 {\rm \ for \ }1\le k<n \}.$$}
\end{itemize}
\end{De}

\medskip
Notice that alternatively, $SE_n=\{0\} \times \{0,1\} \times \ldots \times
\{0,1,\ldots,n-1\}$.

\begin{Rem}
\label{re:comment}
$S_n\subset R_n\subset A_n\subset SE_n\subset G_n(n)$.
\end{Rem}

\noindent Below we give examples to illustrate Remark \ref{re:comment}.

\begin{Exam}$ $
\begin{itemize}
\item If $\bsb s=010145$, then $\bsb s\in SE_6$, but $\bsb s\notin A_6$,
$\bsb s\notin R_6$ and $\bsb s\notin S_6$.
\item If $\bsb s=010103$, then $\bsb s\in SE_6$, $\bsb s\in A_6$,
but $\bsb s\notin R_6$ and $\bsb s\notin S_6$.
\item If $\bsb s=010102$, then $\bsb s\in SE_6$, $\bsb s\in A_6$, and $\bsb s\in R_6$,
but $\bsb s\notin S_6$.
\item If $\bsb s=010101$, then $\bsb s\in SE_6$, $\bsb s\in A_6$, $\bsb s\in R_6$
and $\bsb s\in S_6$.
\end{itemize}
\end{Exam}

\noindent 
See  Table \ref{tab:example_A5} for the sets $S_5$, $R_5$, and $A_5$ listed in $\prec$ order, and Table \ref{tab:example_Atilde5} for the same sets listed in $\prec_c$ order.

\noindent We denote by
\begin{itemize}
\item $\mathcal X_n$ the list for the set $X_n$ in $\prec$ order, and  by $\Xtilde_n$ that in $\prec_c$ order;
\item{$\suc_{X}(\bsb s)$, $\bsb s\in X_n$, the successor of $\bsb s$ in the set
$X_n$ listed in $\prec$ order; that is, the smallest sequence in $X_n$ larger
than $\bsb s$ with respect to $\prec$ order;}
\item{$\widetilde{\suc}_{X}(\bsb s)$ the counterpart of $\suc_{X}(\bsb s)$
with respect to ${\prec}_c$ order;   }
\item{$\first(\mathcal L)$ the first sequence in the list $\mathcal L$;}
\item{$\last(\mathcal L)$ the last sequence in the list $\mathcal L$.}

\end{itemize}

\subsection{Constant Amortized Time algorithms and principle}
\label{subsect:CAT}
An exhaustive generating algorithm is said to run in {\it constant amortized
time} ({\it CAT} for short) if the total amount of computation is proportional
to the number of generated objects. And so, a CAT algorithm can be considered an
efficient algorithm.

Ruskey and van Baronaigien \cite{RuskBar1} introduced three CAT properties, and
proved that if a recursive generating procedure satisfies them, then it
runs in constant amortized time (see also \cite{Rus}). They called this general technique to prove the
efficiency of a generating algorithm as {\it CAT principle}, and the involved
properties are:
\begin{enumerate}
\item Every call of the procedure results in the output of at least one object;
\item Excluding the computation done by the recursive calls, the amount of
      computation of any call is proportional to the {\it degree of the call}, that
      is, the number of call initiated by the current call;
\item The number of calls of degree one, if any, is $O(N)$, where $N$ is the number of
      generated objects.
\end{enumerate}

\noindent All the generating algorithms we present in this paper satisfy these
three desiderata, and so they are efficient.
\section{The Reflected Gray Code order for the sets $SE_n$, $A_n$, $R_n$, and $S_n$}
\label{sec:RGC}
\subsection{The bound of Hamming distance between successive sequences in the lists $\mathcal {SE}_n$,
$\mathcal A_n$, $\mathcal R_n$, and~$\mathcal S_n$}
Here we will show that the Hamming distance between two successive sequences in each of the mentioned lists 
is upper bounded by a constant, and so the lists are Gray codes.

Without another specification, $X_n$ generically denotes one of the
sets $SE_n$, $A_n$, $R_n$, or $S_n$; and
$\mathcal X_n$ denotes its corresponding list in $\prec$ order,
that is, one of the lists $\mathcal{SE}_n$, ${\mathcal A}_n$, ${\mathcal R}_n$, 
or ${\mathcal S}_n$.
Later in this section, Theorem~\ref{The_Gray_A_others} and Proposition \ref{G_SE}
state that, in each case, the set $X_n$ listed in $\prec$ order 
yields a 
Gray code.


\begin{Lem}
\label{lem:differ-is-one}
If $\bsb s=s_1 s_2\ldots s_n$ and $\bsb t=t_1 t_2\ldots t_n$ are two sequences
in $X_n$ with $\bsb t=\suc_X(\bsb s)$ and $k$ is the leftmost position
where they differ, then $s_k=t_k+1$ or $s_k=t_k-1$.
\end{Lem}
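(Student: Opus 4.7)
The strategy is a proof by contradiction: assume $|s_k-t_k|\ge 2$ and construct an intermediate sequence $\bsb u\in X_n$ with $\bsb s\prec \bsb u\prec \bsb t$, contradicting that $\bsb t=\suc_X(\bsb s)$. Throughout, recall that for every statistic $\st\in\{\len,\asc,\m,\lv\}$ and every valid prefix $w_1w_2\ldots w_j$ we have $\st(w_1\ldots w_j)\ge 0$, so $0$ is always an admissible next digit. This means that any valid prefix can be completed to a full sequence in $X_n$ by padding with zeros.

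Define $\bsb u$ by $u_i=s_i=t_i$ for $i<k$, $u_i=0$ for $i>k$, and
\[
u_k=\begin{cases} s_k+1 & \text{if } \sum_{i=1}^{k-1} s_i \text{ is even (so } s_k<t_k\text{)},\\[2pt]
s_k-1 & \text{if } \sum_{i=1}^{k-1} s_i \text{ is odd (so } s_k>t_k\text{)}.\end{cases}
\]
In the even case, $s_k+1\le t_k-1\le t_k\le \st(s_1\ldots s_{k-1})+1$ (the last inequality holds because $t_1\ldots t_k$ is the prefix of a sequence in $X_n$ and $t_i=s_i$ for $i<k$), so $u_k$ is an admissible value. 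In the odd case, $u_k=s_k-1\ge t_k+1\ge 0$ and $u_k\le s_k\le \st(s_1\ldots s_{k-1})+1$, so again $u_k$ is admissible. Hence the prefix $u_1\ldots u_k$ is a valid prefix of an $\st$-restricted growth sequence, and padding with zeros yields $\bsb u\in X_n$.

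It remains to check that $\bsb s\prec \bsb u\prec \bsb t$. By construction, the leftmost position where $\bsb s$ and $\bsb u$ (resp.\ $\bsb u$ and $\bsb t$) differ is $k$, and in each pair the common prefix is $s_1\ldots s_{k-1}$, whose digit sum has the same parity. In the even case, $s_k<u_k<t_k$ gives $\bsb s\prec \bsb u$ and $\bsb u\prec \bsb t$ directly from Definition of $\prec$; in the odd case, $s_k>u_k>t_k$ yields the same conclusion by the second clause of the definition. This produces an element of $X_n$ strictly between $\bsb s$ and $\bsb t$ in the $\prec$ order, contradicting $\bsb t=\suc_X(\bsb s)$.

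The only step that requires any care is justifying that $u_k$ is an admissible digit in position $k$, and this is immediate once one observes that both $s_k$ and $t_k$ satisfy the same upper bound $\st(s_1\ldots s_{k-1})+1$. I expect no real obstacle, since the argument is uniform over the four sets $SE_n$, $A_n$, $R_n$, $S_n$ and in fact works for any statistic satisfying properties (\ref{prop1}) and (\ref{prop2}).
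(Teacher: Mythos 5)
Your proposal is correct and follows essentially the same route as the paper: both construct the intermediate sequence $\bsb u=s_1\ldots s_{k-1}(s_k\pm 1)0\ldots 0$, verify it lies in $X_n$, and derive $\bsb s\prec\bsb u\prec\bsb t$ to contradict $\bsb t=\suc_X(\bsb s)$. You merely spell out the admissibility and ordering checks that the paper dismisses as ``easy to check.''
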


\proof
Let $\bsb t=\suc_X(\bsb s)$ and $k$ be the leftmost position where they differ.
Let us suppose that $s_k<t_k$ and $s_k\neq t_k-1$
(the case $s_k>t_k$ and $s_k\neq t_k+1$
 being similar).\\
It is easy to check that
$$\bsb u=s_1s_2\ldots s_{k-1}(s_k+1)0\ldots 0$$
belongs to $X_n$, and considering the definition of $\prec$ order relation,
it follows that
$\bsb s\prec\bsb u\prec \bsb t$, which is in contradiction
with $\bsb t=\suc_X(\bsb s)$, and the statement holds. 
\endproof

If ${\bsb a}=a_1a_2\ldots a_k\in X_k$, then for any $n>k$,
${\bsb a}$ is the prefix of at least one sequence in $X_n$,
and we denote by ${\bsb a}\,|\,\mathcal X_n$
the sublist of $\mathcal X_n$ of all sequences having the prefix ${\bsb a}$.
Clearly, a list in $\prec$ order for a set of sequences
is a {\it prefix partitioned} list (all sequences with same prefix are
contiguous), and  for any
${\bsb a}\in X_k$ and $n>k$, it follows that ${\bsb a}\,|\,\mathcal X_n$
is a contiguous sublist of $\mathcal X_n$.

For a given $\bsb a\in X_k$, the set of all $x$ such that ${\bsb a}x\in X_{k+1}$ is called the {\it defining set of the prefix} ${\bsb a}$, and obviously
${\bsb a}x$ is also a prefix of some sequences in $X_n$, for any $n>k$.
We denote by
\begin{equation}
\label{de_M}
\omega_X(\bsb a)=\max\{x\,|\,{\bsb a}x \in X_{k+1} \}
\end{equation}
the largest value in the defining set of ${\bsb a}$.
And if we denote $\omega_X(\bsb a)$ by $M$, then by 
Remark \ref{prop} we have
\begin{equation}
\nonumber
\label{eq:increment}
\begin{array}{rcl}
M	&=&\st(\bsb a)+1\\
     &=&\st(\bsb aM).
\end{array}
\end{equation}

\noindent And consequently,
\begin{equation}
\label{eq:increment2}
\begin{array}{rcl}
\omega_X(\bsb aM)&=&\st(\bsb aM)+1\\
	&=&M+1.\\
\end{array}
\end{equation}

The next proposition gives the pattern of ${\bsb s} \in X_n$, if $\bsb
s=\last({\bsb a}\,|\,\mathcal X_n)$ or
$\bsb s=\first({\bsb a}\,|\,\mathcal X_n)$.

\begin{Pro}
\label{pro:RGClast}
Let $k<n$ and ${\bsb a}=a_1a_2\ldots a_k\in X_k$.
If $\bsb s=\last({\bsb a}\,|\,\mathcal X_n)$,
then the pattern of $\bsb s$ is given by:
\begin{itemize}
\item if $\sum_{i=1}^{k} a_i$ is odd, then $\bsb s={\bsb a}0\ldots 0$;    
\item if $\sum_{i=1}^{k} a_i$ is even and $M$ is odd, then
      $\bsb s={\bsb a}M0\ldots 0 $;
\item if $\sum_{i=1}^{k} a_i$ is even and $M$ is even, then
      $\bsb s={\bsb a}M (M+1) 0\ldots 0$;
\end{itemize}
where $M$ denotes $\omega_X(\bsb a)$.

\noindent 
Similar results hold for $\bsb s=\first({\bsb a}\,|\,\mathcal X_n)$
by replacing `odd' by `even', and {\em vice versa}, for the parity of $\sum_{i=1}^{k} a_i$.
\end{Pro}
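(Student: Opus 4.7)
The plan is to argue by induction on $n-k$, exploiting the prefix-partitioned structure of $\mathcal X_n$ in $\prec$ order. Two preliminary observations drive the recursion. First, since $X_n$ is an $\st$-restricted growth set, the defining set of $\bsb a$ is precisely the interval $\{0,1,\ldots,M\}$, where $M=\omega_X(\bsb a)$. Second, from the definition of $\prec$, the sublist $\bsb a\,|\,\mathcal X_n$ decomposes as the concatenation of the sublists $\bsb a x\,|\,\mathcal X_n$ for $x\in\{0,1,\ldots,M\}$, taken in increasing order of $x$ when $\sum_{i=1}^{k} a_i$ is even, and in decreasing order when this sum is odd.

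Combining these two observations, $\last(\bsb a\,|\,\mathcal X_n)=\last(\bsb a x^{\ast}\,|\,\mathcal X_n)$, where $x^{\ast}=M$ if $\sum_{i=1}^{k}a_i$ is even and $x^{\ast}=0$ otherwise. From here I would iterate. When $\sum_{i=1}^{k} a_i$ is odd, the extended prefix $\bsb a 0$ has digit-sum of the same parity, so induction on the suffix length gives $\last(\bsb a 0 \,|\,\mathcal X_n)=\bsb a 0 0\ldots 0$. When $\sum_{i=1}^{k} a_i$ is even with $M$ odd, the extended prefix $\bsb a M$ has odd digit-sum and the previous case yields $\bsb a M 0\ldots 0$. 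Finally, when $\sum_{i=1}^{k} a_i$ is even with $M$ even, the digit-sum of $\bsb a M$ is still even, and invoking (\ref{eq:increment2}) gives $\omega_X(\bsb a M)=M+1$, which is odd; applying the previous subcase to $\bsb a M$ then yields $\bsb a M(M+1) 0\ldots 0$.

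The statement for $\first(\bsb a\,|\,\mathcal X_n)$ follows by exactly the same reasoning, with the roles of $0$ and $M$ interchanged (and hence the parity hypotheses on $\sum_{i=1}^{k}a_i$ swapped) throughout. The whole argument is driven by the parity alternation built into $\prec$, together with the fact that (\ref{eq:increment2}) lets the greedy ``maximal digit'' step be iterated at most once before the parity of the running sum flips to odd and forces zeros.

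The only subtlety I foresee is dispatching the recursion cleanly inside $\mathcal X_n$: for the third case one has to check that $n-k\geq 2$ so that the symbol $M+1$ actually fits, and in general one must appeal to Remark \ref{prop} to guarantee that $M+1$ is a legitimate continuation of $\bsb a M$. Both verifications are immediate from properties (\ref{prop1}) and (\ref{prop2}), so I do not expect any genuine obstacle beyond bookkeeping.
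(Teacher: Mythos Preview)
Your argument is correct and follows essentially the same route as the paper: determine $s_{k+1}$ from the parity of $\sum_{i=1}^k a_i$ (smallest value $0$ when odd, largest value $M$ when even), then iterate using the parity of the new prefix sum together with relation~(\ref{eq:increment2}). The paper presents this as a direct three-case unrolling rather than an explicit induction on $n-k$, but the logical content is identical; your remark about the edge case $n-k\geq 2$ in the third pattern is a point the paper leaves implicit.
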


\proof
\noindent
Let $\bsb s=a_1a_2\ldots a_ks_{k+1}\ldots s_n=\last({\bsb a}\,|\,\mathcal X_n)$.\\
If $\sum_{i=1}^{k} a_i$ is odd, then by considering the definition of 
$\prec$ order, it follows that $s_{k+1}$ is the smallest value in the
defining set of $\bsb a$, and so $s_{k+1}=0$, and finally
$\bsb s={\bsb a}0\ldots 0$, and the first point holds.

\noindent
Now let us suppose that $\sum_{i=1}^{k} a_i$ is even. In this case
$s_{k+1}$ equals $\omega_X(\bsb a)=M$, the largest value in the defining set of 
$\bsb a$.
When in addition $M$ is odd, so is the summation 
of $\bsb a M$, the length $k+1$ prefix of $\bsb s$, and thus 
$\bsb s={\bsb a}M0\ldots 0 $, and the second point holds.\\
Finally, when  $M$ is even, then $s_{k+2}$ is the largest value 
in the defining set of $\bsb a M$, which by relation (\ref{eq:increment}) is $M+1$. 
In this case $M+1$ is odd, and thus $\bsb s={\bsb a}M (M+1) 0\ldots 0$,
and the last point holds.\\
The proof for the case  $\bsb s=\first({\bsb a}\,|\,\mathcal X_n)$ is similar.
\endproof

By Proposition \ref{pro:RGClast} above, we have the following:

\begin{The}
\label{The_Gray_A_others}
The lists $\mathcal A_n$, $\mathcal R_n$ and $\mathcal S_n$ are
$3$-adjacent Gray codes.
\end{The}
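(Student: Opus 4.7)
The plan is to show that any two consecutive sequences $\bsb s$ and $\bsb t=\suc_X(\bsb s)$ in $\mathcal X_n$ differ only within a window of three adjacent positions. First I let $k$ be the leftmost position where $\bsb s$ and $\bsb t$ disagree, so they share the prefix $\bsb a=s_1s_2\ldots s_{k-1}$, and Lemma~\ref{lem:differ-is-one} already gives $|s_k-t_k|=1$. Since $\bsb t$ is the immediate $\prec$-successor of $\bsb s$ and they first diverge at position $k$, the two contiguous sublists $\bsb a s_k\,|\,\mathcal X_n$ and $\bsb a t_k\,|\,\mathcal X_n$ must be adjacent inside $\mathcal X_n$, forcing $\bsb s=\last(\bsb a s_k\,|\,\mathcal X_n)$ and $\bsb t=\first(\bsb a t_k\,|\,\mathcal X_n)$.

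The second step is to apply Proposition~\ref{pro:RGClast} to the length-$k$ prefixes $\bsb a s_k$ and $\bsb a t_k$. Whatever the parities involved, each completion to a $\last$ or $\first$ sequence carries, from position $k+1$ onward, one of three shapes: $0\ldots 0$, $M\,0\ldots 0$, or $M\,(M+1)\,0\ldots 0$. In particular, positions $k+3,k+4,\ldots,n$ hold a zero in both $\bsb s$ and $\bsb t$. Combined with the coincidence on positions $1,\ldots,k-1$, this confines every discrepancy to the window $\{k,k+1,k+2\}$, which yields a Hamming distance of at most three and all differing positions within three consecutive indices, exactly the definition of a $3$-adjacent Gray code. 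The corner cases $k\in\{n-1,n\}$ are trivial, since then the suffix window has length at most one.

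The main (and rather mild) obstacle is bookkeeping the parity cases of Proposition~\ref{pro:RGClast} simultaneously for $\bsb s$ and $\bsb t$: because $s_k$ and $t_k$ differ by one, the two length-$k$ prefix sums have opposite parities, so the proposition selects different branches for the two sequences, and one must verify briefly that no combination of the three pattern shapes produces a nonzero entry beyond position $k+2$. Importantly, no separate argument is needed for each of the three sets $A_n$, $R_n$ and $S_n$: Proposition~\ref{pro:RGClast} is stated uniformly for any statistic $\st$ satisfying relations~(\ref{prop1}) and~(\ref{prop2}), so the theorem follows in one stroke.
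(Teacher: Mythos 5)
Your argument correctly establishes that all differences between $\bsb s$ and $\bsb t=\suc_X(\bsb s)$ are confined to the three positions $k$, $k+1$, $k+2$: this is exactly the paper's first step, and it proves that the lists are $3$-Gray codes. But the theorem claims more, namely that the lists are $3$-\emph{adjacent} Gray codes, which in this paper means that the positions where $\bsb s$ and $\bsb t$ actually differ form a contiguous block (see the paper's own proofs of this theorem and of Theorem~\ref{The:Stilde}, which each contain an explicit ``adjacency'' step). Containment in a window of three consecutive indices does not give this: it leaves open the possibility that $\bsb s$ and $\bsb t$ differ at positions $k$ and $k+2$ while agreeing at $k+1$, in which case the two differing positions are not adjacent. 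Your reading of ``$3$-adjacent'' as ``all differing positions within three consecutive indices'' is where the gap hides.

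The missing step is to show that $s_{k+1}=t_{k+1}$ forces $s_{k+2}=t_{k+2}$. You already have the key ingredient: you observe that Lemma~\ref{lem:differ-is-one} makes the length-$k$ prefix sums of $\bsb s$ and $\bsb t$ have opposite parities, but you use it only for bookkeeping the window containment. The paper uses it to finish: if $s_{k+1}=t_{k+1}=0$, then Proposition~\ref{pro:RGClast} gives $s_{k+2}=t_{k+2}=0$ (the even-prefix-sum branch cannot produce a zero at position $k+1$ since $\omega_X\geq 1$); if $s_{k+1}=t_{k+1}\neq 0$, then this common value must equal both $\omega_X(\bsb a s_k)$ and $\omega_X(\bsb a t_k)$, call it $M$, and depending on the parity of $M$ either $s_{k+2}=t_{k+2}=0$ or $s_{k+2}=t_{k+2}=M+1$. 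Adding this short case analysis closes the gap; everything else in your proposal matches the paper's proof.
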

\proof
Let $\mathcal X_n$ be one of the lists $\mathcal A_n$, $\mathcal R_n$ or 
$\mathcal S_n$, and $\bsb t=\suc_X(\bsb s)$.
Let $k$ be the leftmost position where ${\bsb s}$ and ${\bsb t}$
differ, and let us denote by $\bsb a$ the length $k$ prefix
of ${\bsb s}$ and $\bsb a'$ that of ${\bsb t}$;
so, $\bsb s=\last({\bsb a}\,|\,\mathcal X_n)$ and
$\bsb t=\first({\bsb a'}\,|\,\mathcal X_n)$. 
If $k+3\leq n$, then by Proposition \ref{pro:RGClast},
it follows that 
$s_{k+3}=s_{k+4}=\cdots=s_n=0$ and $t_{k+3}=t_{k+4}=\cdots=t_n=0$. 
So $\bsb s$ and $\bsb t$ differ only in position $k$, and possibly 
in position $k+1$ and in position $k+2$.

\noindent
Now we show the adjacency, that is, if $k+2\leq n$ and 
$s_{k+1}=t_{k+1}$ implies
$s_{k+2}=t_{k+2}$. If $s_{k+1}=t_{k+1}$,  
by Lemma \ref{lem:differ-is-one}, it follows that the summation of the length $k$
prefix of $\bsb s$ and that of $\bsb t$ have different parity,  
and two cases can occur:\\
$\bullet$ $s_{k+1}=t_{k+1}=0$, and by Proposition \ref{pro:RGClast}, it follows that $s_{k+2}=t_{k+2}=0$; or \\
$\bullet$ $s_{k+1}=t_{k+1}\neq 0$, and thus
$s_{k+1}=t_{k+1}=\omega({\bsb a})=\omega({\bsb a}')$. In this case,
$\omega({\bsb a})$ either is odd and so 
$s_{k+2}=t_{k+2}=0$, or is even
and so $s_{k+2}=t_{k+2}=\omega({\bsb a})+1$.

\noindent
In both cases, $s_{k+2}=t_{k+2}$.
\endproof

It is well known that the restriction of ${\cal G}_n(m)$ defined in relation 
(\ref{eq:RGC}) to any product space remains a $1$-Gray code, see for example 
\cite{VajnovVernay}.
In particular, for $SE_n=\{0\} \times \{0,1\} \times \ldots \times
\{0,1,\ldots,n-1\}$ we have the next proposition.
Its proof is simply based on Lemma \ref{lem:differ-is-one}, Proposition~\ref{pro:RGClast}, 
and on the additional remark: for any $\bsb a\in SE_k$, $k<n$, it follows that
$\omega_{SE}(\bsb a)=k$.

\begin{Pro}
\label{G_SE}
The list $\mathcal{SE}_n$ is $1$-Gray code.
\end{Pro}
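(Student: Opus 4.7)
My plan is to exploit the fact that in $SE_n$ the defining set of any prefix depends only on its length: for every $\bsb{a}\in SE_k$ with $k<n$, relation (\ref{Un_Def}) immediately gives $\omega_{SE}(\bsb{a})=k$, irrespective of the digits of $\bsb{a}$. This uniformity is precisely what collapses the distance-three bound of Theorem \ref{The_Gray_A_others} down to one.

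First I would fix two successive sequences $\bsb{s},\bsb{t}$ in $\mathcal{SE}_n$ and let $k$ be the leftmost position where they disagree. Lemma \ref{lem:differ-is-one} gives $|s_k-t_k|=1$ at once. Writing $\bsb{a}$ and $\bsb{a}'$ for the length-$k$ prefixes of $\bsb{s}$ and $\bsb{t}$ respectively, we have $\bsb{s}=\last(\bsb{a}\,|\,\mathcal{SE}_n)$ and $\bsb{t}=\first(\bsb{a}'\,|\,\mathcal{SE}_n)$; since $\bsb{a}$ and $\bsb{a}'$ share their first $k-1$ entries and their $k$-th entries differ by one, the sums $\sum_{i=1}^{k}a_i$ and $\sum_{i=1}^{k}a'_i$ have opposite parities.

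Next I would apply Proposition \ref{pro:RGClast} to each of $\bsb{a}$ and $\bsb{a}'$, using the observation above that $M:=\omega_{SE}(\bsb{a})=k=\omega_{SE}(\bsb{a}')$. A short case analysis follows. If $\sum a_i$ is odd (so $\sum a'_i$ is even), then Proposition \ref{pro:RGClast} forces both $\bsb{s}$ and $\bsb{t}$ to be of the form $\bsb{a}\,0\ldots 0$ and $\bsb{a}'\,0\ldots 0$ respectively, so their entries agree in every position $>k$. If on the other hand $\sum a_i$ is even (so $\sum a'_i$ is odd), then the suffix past position $k$ is $M\,0\ldots 0$ when $M$ is odd and $M\,(M+1)\,0\ldots 0$ when $M$ is even, and this suffix is identical for $\bsb{s}$ and $\bsb{t}$ because $M$ depends only on $k$. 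In every situation $\bsb{s}$ and $\bsb{t}$ coincide beyond position $k$, so they differ exactly at position $k$ and by $\pm 1$, which is the $1$-Gray code property.

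There is no real obstacle here: the argument is a direct application of the machinery already developed for Theorem \ref{The_Gray_A_others}. The one thing worth emphasizing is that, unlike in $A_n$, $R_n$, or $S_n$, the quantity $\omega_{SE}(\bsb{a})$ is a function of the length alone, which is exactly what eliminates the possible differences at positions $k+1$ and $k+2$ that Theorem \ref{The_Gray_A_others} had to allow in general.
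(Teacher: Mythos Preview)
Your argument is correct and is exactly the approach the paper indicates: it combines Lemma~\ref{lem:differ-is-one}, Proposition~\ref{pro:RGClast}, and the observation that $\omega_{SE}(\bsb a)=k$ depends only on the length of $\bsb a$. You have simply spelled out the case analysis that the paper leaves implicit.
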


It is worth to mention that for any statistic $\st$ satisfying relations (\ref{prop1}) and (\ref{prop2}),  the list in $\prec$ order for the set of $\st$-restricted growth sequences of length $n$ is an at most  $3$-Gray code.\\
Actually, the lists $\mathcal{SE}_n$,  $\mathcal A_n$, $\mathcal R_n$, and $\mathcal S_n$ are {\it circular} Gray codes, that is, the last and the first sequences in the list differ in the same way. Indeed, by the definition of $\prec$ order, it follows that:
\begin{itemize}
\item $\first(\mathcal X_n)=000\ldots 0$;
\item $\last(\mathcal X_n)=010\ldots 0$;
\end{itemize}
where $\mathcal X_n$ is one of the list $\mathcal{SE}_n$,  $\mathcal A_n$, $\mathcal R_n$, or $\mathcal S_n$.

\subsection{Generating  algorithms for the lists $\mathcal{SE}_n$, $\mathcal A_n$, $\mathcal R_n$, and $\mathcal S_n$}


\noindent Procedure ${\tt Gen1}$  in Figure \ref{fig:gen1} is a general 
procedure generating exhaustively the list of $\st$-restricted growth 
sequences, where $\st$ is a statistic satisfying 
relations (\ref{prop1}) and (\ref{prop2}).
According to particular instances of the 
function ${\tt Omega\_}X$ called by it
(and so, of the statistic $\st$),
${\tt Gen1}$ produces specific $\st$-restricted growth 
sequences, and in particular the lists $\mathcal {SE}_n$,
$\mathcal A_n$, $\mathcal R_n$, and $\mathcal S_n$.
From the length one sequence $0$, ${\tt Gen1}$ constructs
recursively increasing length $\st$-restricted growth 
sequences: for a given prefix $s_1s_2\ldots s_k$
it produces all prefixes $s_1s_2\ldots s_ki$, with $i$
covering (in increasing or decreasing order) the defining set of $s_1s_2\ldots s_k$;
and eventually all length $n$ $\st$-restricted growth 
sequences. It has the following parameters:
\begin{itemize}
\item $k$, the position in the sequence ${\bsb s}$
      which is updated by the current call;
\item $x$, belongs to the defining set of $s_1s_2\ldots s_{k-1}$, 
      and is the value to be assigned to $s_k$;
\item $dir$, the direction (ascending for $dir\mod 2=0$ and descending
      for $dir\mod 2=1$) to cover the defining set of $s_1s_2\ldots s_{k-1}$;
\item $v$, the value of the statistic of the prefix $s_1s_2\ldots s_{k-1}$
      from which the value of the statistic of 
      the current prefix $s_1s_2\ldots s_k$ is computed. Remark that $v=\omega_X(s_1s_2\ldots s_{k-1})-1$.
\end{itemize}
Function {\tt Omega\_}$X$ computes $\omega_X(s_1s_2\ldots s_k)$ (see relation (\ref{de_M})), and 
the main call is ${\tt Gen1}(1,0,0,0)$. 

\begin{figure}[!h]
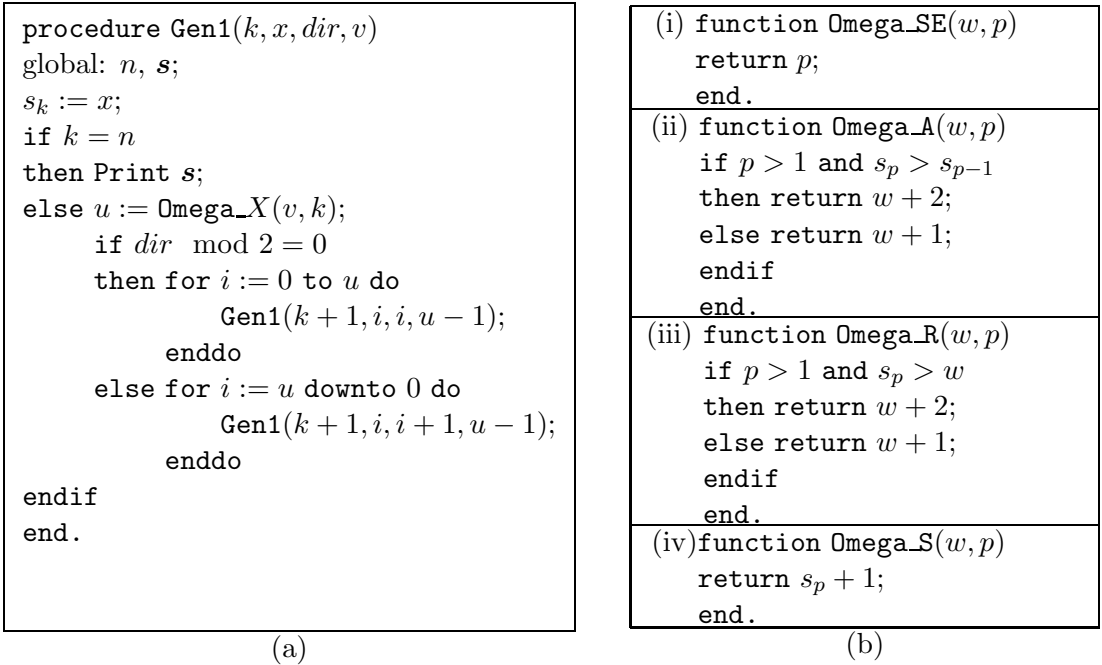

\centering
\begin{tabular}{c}
\fbox{
\begin{minipage}[t][8.1cm]{5cm}
\begin{tabbing}
{\tt procedure} ${\tt Gen1}(k,x,dir,v)$\\
global: $n$, ${\bsb s}$;\\ 
$s_k:=x$;\\
{\tt if} $k=n$ \\
{\tt then} {\tt Print} ${\bsb s}$;\\
{\tt else} \=$u:={\tt Omega\_}X(v,k)$;\\
	\>{\tt if} $dir \mod 2=0$\\
	\>{\tt then} \={\tt for} \=$i:=0$ {\tt to} $u$ {\tt do}\\
	\>		\>	         \>${\tt Gen1}(k+1,i,i,u-1)$;\\
	\>		\>{\tt enddo}\\
	\>{\tt else} \={\tt for} \=$i:=u$ {\tt downto} $0$ {\tt do}\\
	\>		\>	         \>${\tt Gen1}(k+1,i,i+1,u-1)$;\\
	\>		\>{\tt enddo}\\

{\tt endif}\\
{\tt end.}
\end{tabbing}
\end{minipage}
}\\
(a)
\end{tabular}
\quad
\begin{tabular}{c}\hline

	\multicolumn{1}{|c|}{

	(i) \begin{minipage}[t][1.3cm]{5cm}
		{\tt function} {\tt Omega\_SE}$(w,p)$\\
		{\tt return} $p$;\\		
		{\tt end.}	\\
	\end{minipage}
	}

\\ \hline
	\multicolumn{1}{|c|}{

	 (ii) \begin{minipage}[t][2.7cm]{5cm}
		{\tt function} {\tt Omega\_A}$(w,p)$\\
		{\tt if} $p>1$ {\tt and} $s_p>s_{p-1}$\\
		{\tt then} {\tt return} $w+2$;\\
		{\tt else} {\tt return} $w+1$;\\
		{\tt endif}\\		
		{\tt end.}		
	\end{minipage}
	}
\\	\hline
	\multicolumn{1}{|c|}{

	(iii) \begin{minipage}[t][2.7cm]{5cm}
		{\tt function} {\tt Omega\_R}$(w,p)$\\
		{\tt if} $p>1$ {\tt and} $s_p>w$\\
		{\tt then} {\tt return} $w+2$;\\
		{\tt else} {\tt return} $w+1$;\\
		{\tt endif}\\		
		{\tt end.}
	\end{minipage}
	}

\\ \hline
	\multicolumn{1}{|c|}{

	(iv)\begin{minipage}[t][1.3cm]{5cm}
		{\tt function} {\tt Omega\_S}$(w,p)$\\
		{\tt return} $s_p+1$;\\		
		{\tt end.}
		\phantom{text text text text text text text}
	\end{minipage}
	}

\\ \hline

(b)
\end{tabular}

\caption{(a) Algorithm {\tt Gen1}, generating the list $\mathcal X_n$; (b) Particular function {\tt Omega\_}$X$ called by {\tt Gen1}, and returning the value for $\omega_{X}(s_1s_2\ldots s_k)$, if $X_n$ is one of the sets:  (i) $SE_n$, (ii) $A_n$, (iii) $R_n$, and (iv) $S_n$. 
}
\label{fig:gen1}
\end{figure}
 
In ${\tt Gen1}$ the amount of computation of each call is proportional with 
the degree of the call, and there are no degree one calls, and so it 
satisfies the CAT principle
stated at the end of Section \ref{sect:preliminaries}, and so it is an efficient generating 
algorithm.
The computational tree of ${\tt Gen1}$ producing the list $\mathcal A_4$
is given in Figure \ref{fig:tree_pref}. Each node at level $k$,
$1\leq k\leq 4$, represents prefixes $s_1s_2\ldots s_k$, and leaves 
sequences in  $\mathcal A_4$.

\begin{figure}[h]
\begin{center}\includegraphics[width=8.5cm]{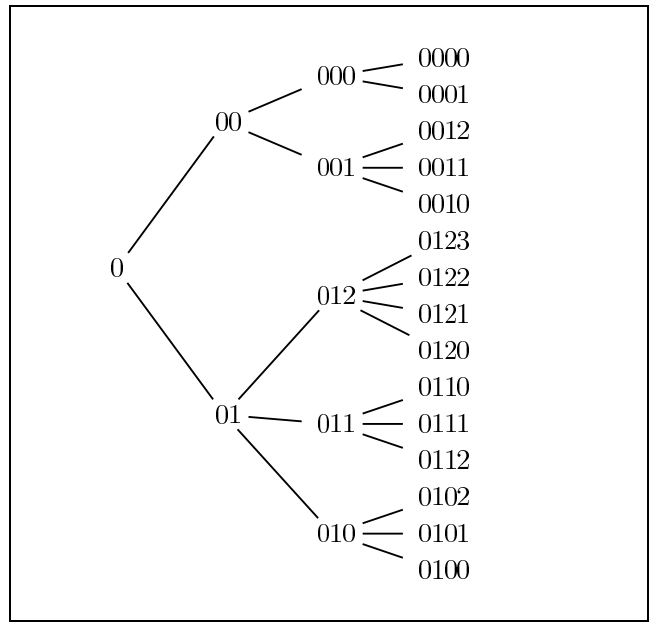}
\end{center}
\caption{The tree induced by the initial call {\tt Gen1}$(1,0,0,0)$ for $n=4$ and
generating the list $\mathcal A_4$.}
\label{fig:tree_pref}
\end{figure}

\clearpage
\section{The Co-Reflected Gray Code order for the sets $SE_n$, $A_n$, $R_n$, and $S_n$}
\label{sec:coRGC}
In this section we will consider, as in the previous one, the sets
$SE_n$, $A_n$, $R_n$ and $S_n$, but listed in ${\prec}_c$ order.
Our main goal is to prove that the obtained lists are Gray codes as well, 
and to develop generating algorithms for these lists.
%
Recall that $X_n$ generically denotes one of the sets $SE_n$, $A_n$, $R_n$, or $S_n$;
and  let $\Xtilde_n$ denote their corresponding list in $\prec_c$ order, that are, $\SEtilde_n$, $\Atilde_n$, $\Rtilde_n$, 
or $\Stilde_n$.
Clearly, a set of sequences listed in $\prec_c$ order is a \textit{suffix partitioned} list, 
that is, all sequences with same suffix are contiguous, and such are
the lists we consider here.

For a set $X_n$ and a sequence 
${\bsb b}=b_{k+1}b_{k+2}\ldots b_n$, we call ${\bsb b}$ an 
\textit{admissible suffix} in $X_n$ if there exists at least a sequence in $X_n$ having
suffix ${\bsb b}$. For example,   $124$ is an admissible suffix in $A_6$, because there 
are sequences in $A_6$ ending with $124$, namely $012124$ and $010124$. On the other 
hand, $224$ is not an admissible suffix in $A_6$; indeed, there is no length 6 ascent 
sequence ending with $224$.

We denote by $\mathcal \Xtilde_n\,|\,{\bsb b}$  the sublist of $\Xtilde_n$ of all 
sequences having  suffix ${\bsb b}$, and clearly, 
$\mathcal \Xtilde_n\,|\,{\bsb b}$ is a contiguous sublist of $\Xtilde_n$.
The set of all $x$ such that $x{\bsb b}$ is 
also an admissible suffix in $X_n$ is called the {\it defining set of the suffix} 
${\bsb b}$.

For $\prec$ order discussed in Section \ref{sec:RGC}, the characterization 
of prefixes is straightforward: $a_1a_2\ldots a_k$ is the prefix of some sequences 
in $X_n$, $n>k$, if and only if $a_1a_2\ldots a_k$ is in $X_k$. And the defining set of the prefix $a_1a_2\ldots a_k$ is $\{0,1,\ldots, \st(a_1a_2\ldots a_k)+1\}$. 
In the case of $\prec_c$ order, it turns out that similar notions are more complicated: 
for example, $13$ is an admissible suffix in $A_5$, but $13$ is not in $A_2$; 
and the defining set of the suffix $13$ is $\{0,2\}$, because $013$ and  $213$ are both admissible suffixes in $A_5$,
but $113$ is not. 
See Table \ref{tab:example_Atilde5} for the set $A_5$
listed in ${\prec}_c$ order.

\subsection{Suffix expansion of sequences in the sets $SE_n$, $A_n$, $R_n$, and $S_n$ }

For a suffix partitioned list, we need to build $\st$-restricted growth sequences under 
consideration from right to left, i.e., by expanding their suffix. For this purpose, we 
need the notions defined below.

\begin{De}
Let
${\bsb b}=b_{k+1}b_{k+2}\ldots b_n$, $1\leq k<n$, an admissible suffix in $X_n$.
\begin{itemize}
\item  $\alpha_{X}(\bsb{b})$ is the set of all  elements in the  
defining set of the suffix $\bsb b$. Formally:
$$
\alpha_{X}(\bsb{b})=\{x\,|\,x\bsb{b}\ {\rm is\ an\ admissible\ suffix\ in\ } X_n \},
$$
and for the empty suffix $\epsilon$,
$\alpha_{X}({\epsilon})=\{0,1,\ldots,n-1\}$.

\item {$\mu_{X}(\bsb{b})$ is the minimum required value of the statistic 
defining the set $X_n$, and provided by a length $(k+1)$ 
prefix of a sequence in $X_n$ having suffix ${\bsb b}$. Formally:
$$
\mu_{X}(\bsb{b})=\min\{\st(s_1s_2\ldots s_kb_{k+1})\,|\,s_1s_2\ldots s_k\bsb{b}\in X_n \}.   
$$
}
\end{itemize}
\end{De}

\noindent Notice that $\mu_{X}(x\bsb{b})\in \{ \mu_{X}(\bsb{b})-1,\mu_{X}(\bsb{b}),x\}$ for $x\in\alpha_X(\bsb b)$.

\begin{Rem}
\label{rem:coref}
Let $\st$ be one of statistics $\asc$, $\m$ or $\lv$, 
and $\bsb{s}=s_1s_2\ldots s_n$ be an $\st$-restricted growth sequence.
If there is a $k <n$ such that $s_{k+1}=k$, then 
$s_i=i-1$ for all $i$, $1\leq i\leq k$.
\end{Rem}
\proof
If $s_k<k-1$, then in each case for $\st$,
$\st(s_1s_2\ldots s_k)<k-1$, 
which is in contradiction with $s_{k+1}=k$, and so $s_k=k-1$.
Similarly, $s_{k-1}=k-2$, \ldots, $s_2=1$, and $s_1=0$.
\endproof

Under the conditions in the previous remark, 
$s_{k+1}=k$ imposes that all values at the left of
$k+1$ in $\bsb{s}$ are uniquely determined. 
As we will see  later, in the induced tree of the generating algorithm, all descendants of a node with $s_{k+1}=k$ have degree one, and we will eliminate the obtained degree-one path in order not to alter the algorithm efficiency.

It is routine to check the following propositions. 
(Actually, Proposition \ref{before_x_mu} is 
a consequence of Remark \ref{prop}.)

\begin{Pro}
\label{before_x_mu}
Let $X_n$ be one of the sets $SE_n$, $A_n$, $R_n$, or $S_n$. If ${\bsb b}=b_{k+1}b_{k+2}\ldots b_n$, $1\leq k<n$, is an admissible suffix in $X_n$, then $b_{k+1}\leq \mu_X(\bsb{b})$.
\end{Pro}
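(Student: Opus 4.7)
The plan is to unpack the definition of $\mu_X$ and reduce the statement to a pointwise inequality. Since $\bsb b$ is admissible, pick any witness $s_1s_2\ldots s_k\bsb b\in X_n$. Because $\mu_X(\bsb b)$ is the minimum of $\st(s_1s_2\ldots s_k b_{k+1})$ over all such witnesses, it suffices to show
\[
b_{k+1}\leq \st(s_1s_2\ldots s_k b_{k+1})
\]
for every individual witness; taking the minimum on the right then finishes the proof.

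To establish that pointwise inequality, I would split on the defining relation (\ref{Un_Def}) applied to $s_{k+1}=b_{k+1}$. Either $b_{k+1}=\st(s_1s_2\ldots s_k)+1$, in which case Remark \ref{prop}(2) directly yields $b_{k+1}=\st(s_1s_2\ldots s_k b_{k+1})$, so equality holds; or $b_{k+1}\leq \st(s_1s_2\ldots s_k)$, and I need to compare $\st(s_1s_2\ldots s_k)$ with $\st(s_1s_2\ldots s_k b_{k+1})$. For $\st\in\{\len,\asc,\m\}$ this is immediate from monotonicity under extension: appending a symbol can only preserve or increase the length, the number of ascents, or the maximum, so $\st(s_1s_2\ldots s_k)\leq \st(s_1s_2\ldots s_k b_{k+1})$, and the desired inequality follows by transitivity. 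For $\st=\lv$ monotonicity fails, but here the inequality is even easier because $\lv(s_1s_2\ldots s_k b_{k+1})=b_{k+1}$ by definition, so the comparison is tautological.

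Combining the two cases gives $b_{k+1}\leq \st(s_1s_2\ldots s_k b_{k+1})$ in all four instances of $X_n$, and taking the minimum over all admissible completions $s_1s_2\ldots s_k$ of $\bsb b$ yields $b_{k+1}\leq \mu_X(\bsb b)$.

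I do not foresee a genuine obstacle: the proof is a direct unpacking once one recognises that $\mu_X$ is defined as a minimum and that the two regimes in (\ref{Un_Def}) are exactly what Remark \ref{prop}(2) is tailored for. The only mild subtlety is that the general template via monotonicity of $\st$ does not literally cover the statistic $\lv$, so one should flag $\lv$ separately and observe that in that case $\st(s_1s_2\ldots s_k b_{k+1})$ equals $b_{k+1}$ outright, which renders the inequality trivial.
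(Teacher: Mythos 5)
Your proof is correct and is essentially the argument the paper intends: the paper gives no proof beyond declaring the proposition ``routine'' and ``a consequence of Remark~\ref{prop}''. Your case split on relation~(\ref{Un_Def}) --- invoking Remark~\ref{prop}(2) when $b_{k+1}=\st(s_1\ldots s_k)+1$, and monotonicity of $\len$, $\asc$, $\m$ under extension (respectively the identity $\lv(s_1\ldots s_k b_{k+1})=b_{k+1}$) when $b_{k+1}\leq\st(s_1\ldots s_k)$ --- supplies exactly the omitted routine details.
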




\begin{Pro}
\label{x_mu}
Let $Y_n$ be one of the sets $A_n$, $R_n$, or $S_n$.
If ${\bsb b}=b_{k+1}b_{k+2}\ldots b_n$, $1\leq k<n$, is an admissible suffix in $Y_n$, then

\begin{itemize}
\item[1] if $\bsb b=b_n$, that is, a length one admissible suffix, then $\mu_Y(\bsb b)=b_n$;
\item[2] $\mu_Y(\bsb{b})=k$ if and only if $b_{k+1}=k$;
\item[3] if $x\bsb{b}$ is also an admissible suffix in $Y_n$
      (i.e., $x\in \alpha_Y(\bsb{b})$) and $x\geq b_{k+1}$, then 
	$$\mu_Y(x\bsb{b})=\max\{x,\mu_Y(\bsb{b})\}.$$
\end{itemize}

\end{Pro}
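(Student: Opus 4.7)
The plan is to prove each of the three points by case analysis on the statistic $\st\in\{\asc,\m,\lv\}$, exhibiting in each instance explicit sequences that realize the claimed extrema.

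For point~1, the definition gives $\mu_Y(b_n)=\min\st(s_1\ldots s_{n-1}b_n)$ over all admissible extensions. I would first establish the common lower bound $\st(\bsb s)\geq s_n$ for every $\bsb s\in Y_n$: trivial for $\lv$, immediate for $\m$, and a short induction on $n$ for $\asc$ using the defining inequality $s_{k+1}\leq\asc(s_1\ldots s_k)+1$. Matching this lower bound requires a witness: any sequence works for $\lv$; the sequence $0,1,\ldots,b_n,0,\ldots,0,b_n$ lies in $R_n$ with maximum $b_n$; and the sequence $0,1,\ldots,b_n-1,b_n-1,\ldots,b_n-1,b_n$ lies in $A_n$ with exactly $b_n$ ascents.

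For point~2, the ``if'' direction uses Remark~\ref{rem:coref}: if $b_{k+1}=k$, then every $s_1\ldots s_k\bsb b\in Y_n$ has $s_i=i-1$ for $i\leq k$, so the length-$(k+1)$ prefix is $0,1,\ldots,k$ and its statistic equals $k$ in all three cases, forcing $\mu_Y(\bsb b)=k$. Conversely, Proposition~\ref{before_x_mu} gives $b_{k+1}\leq\mu_Y(\bsb b)=k$ and relation~(\ref{prop1}) gives $\st(s_1\ldots s_{k+1})\leq k$; so $\mu_Y(\bsb b)=k$ forces every admissible length-$(k+1)$ prefix to attain $\st=k$, while $\st(s_1\ldots s_k)\leq k-1$ still holds by~(\ref{prop1}). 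A short direct argument on each statistic then forces $b_{k+1}=k$.

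For point~3 the key observation is that $x\geq b_{k+1}$ implies $\st(s_1\ldots s_{k-1}xb_{k+1})=\st(s_1\ldots s_{k-1}x)$ for $\asc$ and $\m$; the $\lv$ case is immediate since $\mu_S(x\bsb b)=x$ and $\mu_S(\bsb b)=b_{k+1}\leq x$. For the two nontrivial statistics, the lower bound splits into $\mu_Y(x\bsb b)\geq x$ (using the bound $\st(s_1\ldots s_{k-1}x)\geq x$ from point~1) and $\mu_Y(x\bsb b)\geq\mu_Y(\bsb b)$ (the minimizing set for $\mu_Y(x\bsb b)$ sits inside that for $\mu_Y(\bsb b)$ with $s_k$ fixed to $x$, and the statistics coincide by the key observation). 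For the matching upper bound I would build a canonical length-$k$ prefix: set $M=\max\{x,\mu_Y(\bsb b)\}$ and take $v_1\ldots v_k=0,1,\ldots,M,x,x,\ldots,x$ for $\asc$, or $0,1,\ldots,M,0,\ldots,0,x$ for $\m$, then concatenate $\bsb b$. Point~2 combined with $x\geq b_{k+1}$ excludes $\mu_Y(\bsb b)=k$, so $M\leq k-1$ and this prefix fits. Comparing $\bsb v$ with a $\mu_Y(\bsb b)$-witness $\bsb u$ position by position, the splitting of the statistic at position $k$ together with $M\geq\mu_Y(\bsb b)$ gives $\st(v_1\ldots v_j)\geq\st(u_1\ldots u_j)$ for every $j\geq k$, so each constraint $b_j\leq\st(u_1\ldots u_{j-1})+1$ inherited from $\bsb u\in Y_n$ passes to $\bsb v$, giving $\bsb v\in Y_n$.

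The main delicacy is exactly this last step of point~3: verifying that the canonical length-$k$ prefix can genuinely be extended by $\bsb b$ inside $Y_n$. Once the statistic is split across position $k$ and $\bsb u$ is used as a template beyond position $k$, the verification reduces to the inequality $M\geq\mu_Y(\bsb b)$, which holds by construction.
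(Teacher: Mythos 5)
The paper offers no proof of Proposition~\ref{x_mu} at all --- it is dismissed as ``routine to check'' --- so your argument cannot be compared to the authors'; what you have written is an actual proof of something they left implicit, and its architecture (a uniform lower bound $\st(\bsb s)\ge s_{\mathrm{last}}$ plus explicit witnesses for point~1; Remark~\ref{rem:coref} plus relation~(\ref{prop1}) for point~2; and for point~3 a two-sided bound, with the lower bound from the inclusion of minimizing sets and the junction identity $\st(s_1\ldots s_{k-1}xb_{k+1})=\st(s_1\ldots s_{k-1}x)$ when $x\ge b_{k+1}$, and the upper bound from a canonical prefix compared against a $\mu_Y(\bsb b)$-witness) is the natural one and is sound. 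You also correctly avoid circularity by not invoking Propositions~\ref{pro:defseq_suff} and \ref{pro:defseq_suff_rgf}, whose proofs in the paper depend on this proposition.

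One step deserves an explicit line rather than the phrase ``this prefix fits''. Your canonical prefix $0,1,\ldots,M,\ldots$ must occupy exactly $k$ positions \emph{and end in $x$}; when $M=k-1$ the initial run $0,1,\ldots,M$ already fills all $k$ slots and terminates in $M$, which is only the required value if $x=M$. This is in fact forced: if $\mu_Y(\bsb b)=k-1$ and $x\in\alpha_Y(\bsb b)$ with $x\ge b_{k+1}$, then any $s_1\ldots s_{k-1}x\bsb b\in Y_n$ satisfies $\st(s_1\ldots s_{k-1}x)=\st(s_1\ldots s_{k-1}xb_{k+1})\ge k-1$ while relation~(\ref{prop1}) caps it at $k-1$, and attaining $k-1$ forces $x=k-1$ for both $\asc$ (the prefix must be strictly increasing) and $\m$ (the maximum $k-1$ can only be contributed by $x$). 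So the gap closes, but as written the construction silently assumes it. A cosmetically similar off-by-one occurs in your point-1 witness for $R_n$ when $b_n=n-1$ (the pattern $0,1,\ldots,b_n,0,\ldots,0,b_n$ has length $n+1$ there; the intended witness is just $0,1,\ldots,n-1$). Neither issue threatens the proof.
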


The following propositions give
the values for $\alpha_X(\bsb{b})$ and $\mu_{X}(x\bsb{b})$, if $X_n$ is one of the sets $SE_n$, $A_n$, $R_n$, or $S_n$.
We do not  provide the proofs for Propositions \ref{pro:defseqSE_suff}, 
\ref{pro:muSE_parameter}, \ref{pro:defseq_suff_stair}, and 
\ref{pro:mu_parameter_stair}, because they are obviously 
based on the definition of the corresponding sequences.

\begin{Pro}
\label{pro:defseqSE_suff}
Let $\bsb{b}=\epsilon$ or $\bsb{b}=b_{k+1}b_{k+2}\ldots b_n$ be an admissible suffix in $SE_n$.
Then 
\begin{equation}
\nonumber
\label{eq:alpha_SEtilde}
\alpha_{SE}(\bsb{b})=\left\{ \begin {array}{ll}
\{0,1,,\ldots,n-1\} & {\rm if\ }  \bsb{b}=\epsilon,\\
\{0,1,\ldots,k-1\} & {\rm otherwise.}  
\end {array}
\right.
\end{equation}
\end{Pro}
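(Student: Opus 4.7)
The plan is to exploit the fact, recalled in the paragraph preceding Remark~\ref{re:comment}, that $SE_n$ is just the product set $\{0\}\times\{0,1\}\times\cdots\times\{0,1,\ldots,n-1\}$. Because the constraint defining a subexcedant sequence is positional and independent across coordinates, admissibility of a suffix reduces to checking each individual entry against its positional bound, with no interaction between coordinates.

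First I would treat the case $\bsb b=\epsilon$. Here $x\bsb b=x$ is a length one suffix sitting in position $n$ of a putative sequence in $SE_n$. By the product description, such a sequence exists with $s_n=x$ if and only if $x\in\{0,1,\ldots,n-1\}$ (for any choice of $x$ in this range one may take, e.g., $s_1s_2\ldots s_{n-1}=00\ldots 0$ as a completing prefix). Hence $\alpha_{SE}(\epsilon)=\{0,1,\ldots,n-1\}$.

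Next I would handle $\bsb b=b_{k+1}b_{k+2}\ldots b_n$. Assume $\bsb b$ is admissible, so by definition there is some sequence $s_1s_2\ldots s_k\bsb b\in SE_n$; in particular $b_i\in\{0,1,\ldots,i-1\}$ for each $k+1\le i\le n$. Now $x\bsb b$ places $x$ in position $k$, so admissibility requires $x\in\{0,1,\ldots,k-1\}$. Conversely, for any such $x$ one can extend $x\bsb b$ to a full sequence in $SE_n$ by prepending $00\ldots 0$ in positions $1,2,\ldots,k-1$; this works because the subexcedant condition at position $i\le k-1$ only requires $s_i\in\{0,\ldots,i-1\}$, which $s_i=0$ satisfies, and because the tail $b_{k+1}\ldots b_n$ was already admissible. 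Therefore $\alpha_{SE}(\bsb b)=\{0,1,\ldots,k-1\}$.

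There is essentially no obstacle: the only point worth noting is that we must not forget the (trivial) completion step that produces an actual sequence of $SE_n$ with the prescribed suffix, ensuring that every element of $\{0,1,\ldots,k-1\}$ really does belong to the defining set and not merely respects the positional bound. Since the all-zero prefix always works, this is immediate, which is why the authors simply omit the proof.
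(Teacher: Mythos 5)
Your argument is correct and is precisely the ``obvious from the definition'' reasoning the paper alludes to when it omits the proof: the product-set description $SE_n=\{0\}\times\{0,1\}\times\cdots\times\{0,1,\ldots,n-1\}$ makes the positional bounds independent, so admissibility of $x\bsb b$ reduces to $x\in\{0,\ldots,k-1\}$ together with the all-zero prefix completion. Nothing further is needed.
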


\begin{Pro}
\label{pro:muSE_parameter}
Let $\bsb{b}=b_{k+1}b_{k+2}\ldots b_n$ be an admissible suffix in $SE_n$
and $x\in\alpha_{SE}(\bsb{b})$. Then

\begin{equation}
\nonumber
\label{eq:mu_SEtilde}
\mu_{SE}(x\bsb{b})=\mu_{SE}(\bsb b)-1. 
\end{equation}
\end{Pro}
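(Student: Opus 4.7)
The plan is to observe that for $SE_n$ the governing statistic is $\len$, and $\len$ is a trivial length-only statistic: by definition $\len(s_1s_2\ldots s_j)=j-1$, independent of the values $s_i$. So the min in the definition of $\mu_{SE}$ is essentially a formality, since every term of the minimum takes the same value.

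First I would unwind the definition of $\mu_{SE}(\bsb b)$ when $\bsb b=b_{k+1}b_{k+2}\ldots b_n$. Because $\bsb b$ is admissible, there is at least one $s_1s_2\ldots s_k\bsb b\in SE_n$. For any such witness, the length $k+1$ prefix $s_1\ldots s_kb_{k+1}$ has $\len(s_1\ldots s_kb_{k+1})=k$, regardless of the $s_i$. Hence the minimum in the definition of $\mu_{SE}(\bsb b)$ collapses to the constant value $k$, i.e.\ $\mu_{SE}(\bsb b)=k$.

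Second, I would repeat the same argument for $x\bsb b$. The hypothesis $x\in\alpha_{SE}(\bsb b)$ guarantees that $x\bsb b$ is an admissible suffix in $SE_n$, so there exists $s_1\ldots s_{k-1}x\bsb b\in SE_n$. Its length $k$ prefix $s_1\ldots s_{k-1}x$ satisfies $\len(s_1\ldots s_{k-1}x)=k-1$. Again the minimum collapses, giving $\mu_{SE}(x\bsb b)=k-1$. Subtracting the two identities yields the desired equality $\mu_{SE}(x\bsb b)=\mu_{SE}(\bsb b)-1$.

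There is no genuine obstacle: the entire argument rests on the fact that $\len$ depends only on the length of its argument, so the minimum in the definition of $\mu_{SE}$ is attained by every feasible prefix. The only small point to confirm is that the minimum is taken over a non-empty set in each case, which follows from the admissibility of $\bsb b$ and, for $\mu_{SE}(x\bsb b)$, from the hypothesis $x\in\alpha_{SE}(\bsb b)$.
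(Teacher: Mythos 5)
Your proof is correct and is exactly the argument the paper has in mind: the authors omit the proof as ``obviously based on the definition,'' and the intended reasoning is precisely your observation that $\len$ depends only on the length of its argument, so $\mu_{SE}(\bsb b)=k$ and $\mu_{SE}(x\bsb b)=k-1$. Your added care about non-emptiness of the sets over which the minima are taken is a correct and harmless refinement.
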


\noindent Obviously, for a length one suffix $\bsb b=b_n$, it follows that $\mu_{SE}(\bsb b)=n-1$.

\begin{Exam}
If $\bsb b= \epsilon$, and $n=10$, then $\alpha_{SE}(\bsb b)=\{0,1,\ldots, 9\}$;\\
and for $\bsb b=b_{10}$, $b_{10}\in \{0,1,\ldots, 9\}$, it follows that $\mu_{SE}(x\bsb b)=9-1=8$, for all $x\in \alpha_{SE}(\bsb b)$.
\end{Exam}

\begin{Pro}
\label{pro:defseq_suff}
Let $\bsb{b}=\epsilon$ or $\bsb{b}=b_{k+1}b_{k+2}\ldots b_n$ be an admissible suffix in $A_n$.
Then 
\begin{equation}
\nonumber
\label{eq:alpha_Atilde}
\alpha_{A}(\bsb{b})=\left\{ \begin {array}{lll}
\{0,1,\ldots,n-1\} & {\rm if} & \bsb{b}=\epsilon,\\
\{k-1\}              & {\rm if} & \mu_{A}(\bsb{b})=k, {\rm \ or\ } \mu_A(\bsb b)=k-1 {\rm \ and\ } b_{k+1}=0,\\
\{0,1,\ldots,b_{k+1}-1\}\cup \{k-1\}    & {\rm if} & \mu_{A}(\bsb{b})= k-1 
                             {\rm \,and\ } 0<b_{k+1}<k, \\
\{0,1,\ldots,k-1\}  & {\rm if} & \mu_{A}(\bsb{b})<k-1.
\end {array}
\right.
\end{equation}
\end{Pro}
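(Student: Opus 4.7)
I would prove the proposition by case analysis on $\mu_A(\bsb b)$ (and on $b_{k+1}$ when $\mu_A(\bsb b) = k - 1$), establishing both inclusions in each case by combining structural constraints on ascent sequences with explicit constructions for the reverse direction. The three ``boundary'' cases are essentially forced: for $\bsb b = \epsilon$ the upper bound $x \le n - 1$ is immediate from iterating the defining relation, while each $x \in \{0, 1, \ldots, n - 1\}$ is realized by $0, 1, \ldots, x - 1, 0, \ldots, 0, x$ (with enough padding). For $\mu_A(\bsb b) = k$, Proposition~\ref{x_mu}(2) gives $b_{k+1} = k$, and Remark~\ref{rem:coref} then forces $s_i = i - 1$ for all $1 \le i \le k$, so $s_k = k - 1$ is the unique admissible value. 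For $\mu_A(\bsb b) = k - 1$ with $b_{k+1} = 0$, appending $0$ does not create an ascent, so the minimum condition collapses to $\min \asc(s_1 \ldots s_k) = k - 1$; combined with $\asc(s_1\ldots s_k) \le k - 1$ this forces $s_1 \ldots s_k = 0, 1, \ldots, k - 1$ and again $s_k = k - 1$.

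The main case is $\mu_A(\bsb b) = k - 1$ with $0 < b_{k+1} < k$. For the forward inclusion, if $x \in \alpha_A(\bsb b)$ and $x \ge b_{k+1}$, then $x \to b_{k+1}$ contributes no ascent, so $\asc(s_1 \ldots s_{k-1} x) \ge \mu_A(\bsb b) = k - 1$; saturating the bound $\asc \le k - 1$ forces the strictly increasing prefix and $x = k - 1$, while $x < b_{k+1}$ automatically places $x \in \{0, 1, \ldots, b_{k+1} - 1\}$. For the reverse, I would take $s_1 \ldots s_{k-1} = 0, 1, \ldots, k - 2$ with $s_k = x$ for $x \in \{0, \ldots, b_{k+1} - 1\}$: then $x \le k - 2$, the transition $x \to b_{k+1}$ is an ascent, and the length-$(k{+}1)$ ascent count is exactly $k - 1 = \mu_A(\bsb b)$. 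For $x = k - 1$, the candidate $0, 1, \ldots, k - 1, b_{k+1}$ also has ascent count $k - 1$ at position $k + 1$ (since $b_{k+1} \le k - 1$). In both cases, the constraints $b_{j+1} \le \asc(\ldots) + 1$ at positions $j \ge k + 1$ transfer from a known valid extension in $A_n$ of $\bsb b$ achieving $\mu_A(\bsb b)$.

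Finally, for $\mu_A(\bsb b) < k - 1$, the forward inclusion $\alpha_A(\bsb b) \subseteq \{0, \ldots, k - 1\}$ is the general bound $s_k \le k - 1$, and the reverse follows from the same construction $s_1 \ldots s_{k-1} = 0, 1, \ldots, k - 2$: for every $x \in \{0, \ldots, k - 1\}$ the length-$(k{+}1)$ prefix has ascent count at least $k - 2 \ge \mu_A(\bsb b)$, so the suffix constraints again transfer from any known valid extension. The main obstacle will be justifying this transfer of constraints; the key observation is the additive decomposition $\asc(s_1 \ldots s_k b_{k+1} \ldots b_j) = \asc(s_1 \ldots s_k b_{k+1}) + \asc(b_{k+1} \ldots b_j)$, valid for any prefix $s_1 \ldots s_k$, so once the length-$(k{+}1)$ prefix meets the ascent threshold attained by some known valid extension, every subsequent constraint imposed by $\bsb b$ is automatically satisfied, irrespective of the specific prefix chosen.
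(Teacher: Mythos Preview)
Your proposal is correct and follows essentially the same case analysis on $\mu_A(\bsb b)$ (and on $b_{k+1}$ in the borderline case) as the paper's proof, invoking Proposition~\ref{x_mu} and Remark~\ref{rem:coref} at the same places. The paper's argument is terser and only spells out the forward inclusion in each case; your version is more explicit about the reverse inclusions (constructing the witness prefix $0,1,\ldots,k-2,x$) and about the transfer of the suffix constraints via the additive decomposition of $\asc$, points the paper leaves to the reader.
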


\proof
If $\bsb{b}=\epsilon$, the result is obvious.\\
For $\bsb{b}\neq\epsilon$, let
$x\in \alpha_{A}(\bsb{b})$.\\
\noindent
If $\mu_{A}(\bsb{b})=k$, by Proposition \ref{x_mu} point 2,
$b_{k+1}=k$ and by Remark \ref{rem:coref}
we have $x=k-1$.\\
\noindent If $\mu_{A}(\bsb{b})=k-1$ and $b_{k+1}=0$, then $\asc(xb_{k+1})=0$, and so 
$\mu_{A}(x\bsb{b})=\mu_{A}(\bsb{b})=k-1$,
and again by Proposition \ref{x_mu} point 2 we have $x=k-1$.\\
If $\mu_{A}(\bsb{b})=k-1$ and $0< b_{k+1}<k$, then there are two 
possibilities for $\mu_{A}(x\bsb{b})$:
\begin{itemize}
\item $\mu_{A}(x\bsb{b})=\mu_{A}(\bsb{b})=k-1$, if $\asc(xb_{k+1})=0$,
and as above $x=k-1$;
\item $\mu_{A}(x\bsb{b})=\mu_{A}(\bsb{b})-1=k-2$, if $\asc(xb_{k+1})=1$.  
In this case $x\in\{0,1,\ldots,b_{k+1}-1\}$. 
\end{itemize}
If $\mu_{A}(\bsb{b})<k-1$ (and consequently $0\le b_{k+1}<k$), 
then  there are two possibilities for $\mu_{A}(x\bsb{b})$:
\begin{itemize}
\item $\mu_{A}(x\bsb{b})=\mu_{A}(\bsb{b})$, if  $\asc(xb_{k+1})=0$, 
and we have $x\in\{b_{k+1},b_{k+1}+1,\ldots, k-1\}$;
\item $\mu_{A}(x\bsb{b})=\mu_{A}(\bsb{b})-1$, if  $\asc(xb_{k+1})=1$,
and we have $x\in\{0,1,\ldots,b_{k+1}-1\}$.
\end{itemize}
\endproof

\begin{Pro}
\label{eq:mu_Atilde}
Let $\bsb{b}=b_{k+1}b_{k+2}\ldots b_n$ be an admissible suffix in $A_n$
and $x\in\alpha_A(\bsb{b})$. Then
\begin{equation}
\nonumber
\mu_{A}(x\bsb{b})=\left\{ \begin {array}{lll}
x               & {\rm if} & x\geq \mu_{A}(\bsb{b}),\\
\mu_A(\bsb{b})    & {\rm if} & b_{k+1}\leq x< \mu_{A}(\bsb{b}), \\
\mu_A(\bsb{b})-1  & {\rm if} & x<b_{k+1}.
\end {array}
\right.
\end{equation}
\end{Pro}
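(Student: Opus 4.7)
My plan is as follows. Cases 1 and 2 of the statement (both covered by $x \geq b_{k+1}$) are dispatched in one shot by Proposition \ref{x_mu} point 3, which gives $\mu_A(x\bsb{b}) = \max\{x, \mu_A(\bsb{b})\}$: this equals $x$ when $x \geq \mu_A(\bsb{b})$ and equals $\mu_A(\bsb{b})$ when $b_{k+1} \leq x < \mu_A(\bsb{b})$.

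The substantive work is case 3, $x < b_{k+1}$. The identity that drives everything is that, since $x < b_{k+1}$ introduces exactly one new ascent,
\[
\asc(s_1 \ldots s_{k-1} x b_{k+1}) = \asc(s_1 \ldots s_{k-1} x) + 1
\]
for any valid $s_1 \ldots s_{k-1} x \bsb{b} \in A_n$. The lower bound $\mu_A(x\bsb{b}) \geq \mu_A(\bsb{b}) - 1$ is then immediate: the prefix $s_1 \ldots s_{k-1} x$ is a candidate in the minimum defining $\mu_A(\bsb{b})$, forcing $\asc(s_1 \ldots s_{k-1} x b_{k+1}) \geq \mu_A(\bsb{b})$, and taking a minimum over valid prefixes yields the claim.

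For the matching upper bound I would exhibit an explicit witness, distinguishing by the value of $\mu_A(\bsb{b})$ along the subcases of Proposition \ref{pro:defseq_suff}. When $\mu_A(\bsb{b}) = k$ we have $\alpha_A(\bsb{b}) = \{k-1\}$, so $x = k - 1$, and Remark \ref{rem:coref} forces the unique prefix $0\, 1 \cdots (k-1)$, whose last entry is already $x$; this is the desired witness. When $\mu_A(\bsb{b}) = m \leq k - 1$, I would take the explicit prefix
\[
s_1 s_2 \ldots s_{k-1} = 0 \, 1 \, 2 \cdots (m-1) \underbrace{(m-1)(m-1) \cdots (m-1)}_{k-1-m \text{ copies}},
\]
which is a valid length-$(k-1)$ ascent sequence with $\asc = m - 1$; since Proposition \ref{before_x_mu} gives $x \leq b_{k+1} - 1 \leq m - 1 = s_{k-1}$, appending $x$ creates no new ascent, so $\asc(s_1 \ldots s_{k-1} x) = m - 1$ as required.

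The main obstacle is verifying that the constructed sequence $s_1 \ldots s_{k-1} x \bsb{b}$ actually lies in $A_n$. The ascent-sequence constraints at positions $k$ and $k+1$ reduce to $x \leq m$ and $b_{k+1} \leq m$, both coming from Proposition \ref{before_x_mu}. For each $j \geq k+2$, a direct count yields
\[
\asc(s_1 \ldots s_{k-1} x b_{k+1} \ldots b_{j-1}) = m + \asc(b_{k+1} \ldots b_{j-1}),
\]
and exactly the same quantity arises when $s_1 \ldots s_{k-1} x$ is replaced by any minimizer $s_1^* \ldots s_k^*$ of $\mu_A(\bsb{b})$; since that minimizer together with $\bsb{b}$ lies in $A_n$, the bound $b_j \leq m + \asc(b_{k+1} \ldots b_{j-1}) + 1$ transfers verbatim to our witness, closing the upper bound and completing the proof.
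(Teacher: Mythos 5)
Your proposal is correct, and for the first two cases ($x\geq b_{k+1}$) it is exactly the paper's argument: Proposition \ref{before_x_mu} plus Proposition \ref{x_mu} point 3 give $\mu_A(x\bsb{b})=\max\{x,\mu_A(\bsb{b})\}$, which resolves both subcases at once. For the third case the paper is far terser: it simply observes that $\asc(xb_{k+1})=1$ and concludes $\mu_A(x\bsb{b})=\mu_A(\bsb{b})-1$. That observation, as you correctly note, only yields the lower bound $\mu_A(x\bsb{b})\geq\mu_A(\bsb{b})-1$ directly (every prefix ending in $x$ picks up one extra ascent when $b_{k+1}$ is appended, so the minimum over the restricted family cannot drop by more than one); the matching upper bound requires exhibiting a prefix ending in $x$ whose ascent count is $\mu_A(\bsb{b})-1$ and which remains compatible with the whole suffix $\bsb{b}$. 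Your explicit witness $0\,1\cdots(m-1)(m-1)\cdots(m-1)x$ (and the forced prefix $01\cdots(k-1)$ when $\mu_A(\bsb{b})=k$), together with the check that the constraints at positions $j\geq k+2$ transfer from a minimizer of $\mu_A(\bsb{b})$ because both prefixes contribute the same ascent count $m$ up to position $k+1$, fills precisely the gap the paper leaves implicit. So this is the same route, carried out more rigorously where the paper hand-waves; the only cost is length, and the edge cases ($\mu_A(\bsb{b})=0$ makes case 3 vacuous, $\mu_A(\bsb{b})=k$ forces the staircase prefix) are handled correctly.
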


\begin{proof}
If $x\ge \mu_{A}(\bsb{b})$, by Proposition \ref{before_x_mu}
it follows that $x\ge b_{k+1}$, and by Proposition \ref{x_mu} point 3, 
that $\mu_{A}(x\bsb{b})=\max\{x,\mu_{A}(\bsb{b})\}=x$.\\
If $b_{k+1}\le x<\mu_{A}(\bsb{b})$, then, again by Proposition \ref{x_mu} point 3,
it follows that $\mu_{A}(x\bsb{b})=\max\{x,\mu_{A}(\bsb{b})\}=\mu_{A}(\bsb{b})$.\\
If $x<b_{k+1}$, then $\asc(xb_{k+1})=1$, so 
$\mu_{A}(x\bsb{b})=\mu_{A}(\bsb{b})-1$.
\end{proof}

\begin{Exam}
Let $k=5$, $n=9$, and $\bsb{b}=b_6b_7b_8b_9=2050$ be an admissible
suffix in $A_9$. Clearly,  $\mu_A(\bsb{b})$, the minimum number of
ascents in a prefix $s_1s_2\ldots s_5b_6$ such that 
$s_1s_2\ldots s_5\bsb{b}\in A_9$, is $4$.

\noindent In this case, denoting $s_5$ by $x$, we have
\begin{itemize}
\item the set $\alpha_A(\bsb{b})$ of all possible values for $x$ 
is $\{0,1,,\ldots,b_{k+1}-1\}\cup \{k-1\}=\{0,1\}\cup\{4\}$. 
\item $\mu_A(x\bsb b)=\mu_A(\bsb b)-1=4-1=3$, if $x\in \{0,1\}$; 
or $\mu_A(x\bsb b)=\mu_A(\bsb b)=4$, if $x=4$.
\end{itemize}
\end{Exam}

\begin{Pro}
\label{pro:defseq_suff_rgf}
Let  $\bsb{b}=\epsilon$ or $\bsb{b}=b_{k+1}b_{k+2}\ldots b_n$ be an admissible suffix in $R_n$.
Then 
\begin{equation}
\nonumber
\label{eq:alpha_Rtilde}
\alpha_{R}(\bsb{b})=\left\{ \begin {array}{lll}
\{0,1,\ldots,n-1\} & {\rm if} & \bsb{b}= \epsilon,\\
\{k-1\}		& {\rm if} & \mu_{R}(\bsb{b})=k,\ {\rm or\ }
                  \mu_{R}(\bsb{b})=k-1 \ {\rm and}\ b_{k+1}<k-1, \\
\{0,1,\ldots,k-1\}    & {\rm if} & \mu_{R}(\bsb{b})= k-1 \ {\rm and}\  b_{k+1}=k-1,\ {\rm or}\ 
                        \mu_{R}(\bsb{b})<k-1.
\end {array}
\right.
\end{equation}
\end{Pro}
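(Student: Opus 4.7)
The strategy is to treat the four alternatives separately, combining Proposition \ref{x_mu} (which reads $\mu_R$ off of $b_{k+1}$) with Remark \ref{rem:coref} (which rigidifies a prefix whose running max is as large as possible). For the empty suffix $\bsb b=\epsilon$, the sequence $012\cdots(n-2)x$ belongs to $R_n$ for every $x\in\{0,1,\ldots,n-1\}$, establishing the first line immediately.

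For the two cases where the claim reduces to $\alpha_R(\bsb b)=\{k-1\}$: when $\mu_R(\bsb b)=k$, Proposition \ref{x_mu}(2) gives $b_{k+1}=k$, and then Remark \ref{rem:coref} applied to the statistic $\m$ yields $s_i=i-1$ for $1\le i\le k$, so $x=s_k=k-1$. When $\mu_R(\bsb b)=k-1$ and $b_{k+1}<k-1$, the minimum-max condition forces $\m(s_1\ldots s_k)=k-1$. An RGF of length $k$ whose running max equals $k-1$ necessarily lists the values $0,1,\ldots,k-1$ in that order: tracking the first-occurrence position of each value $v$, the RGF condition forces the chain $1=p_0<p_1<\cdots<p_{k-1}\le k$, so $p_v=v+1$ and again $s_k=k-1$.

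In the remaining case ($\mu_R(\bsb b)=k-1$ with $b_{k+1}=k-1$, or $\mu_R(\bsb b)<k-1$), I will exhibit a uniform witness: for each $x\in\{0,1,\ldots,k-1\}$, take $s_1\ldots s_{k-1}=012\cdots(k-2)$ and $s_k=x$. This is a valid RGF of length $k$ with $\m(s_1\ldots s_k)=\max(k-2,x)\ge k-2$, so the local constraint $b_{k+1}\le\m(s_1\ldots s_k)+1$ holds in both subcases (since $b_{k+1}\le k-1$). To carry admissibility through to $b_{k+2},\ldots,b_n$, I will compare the running max of our prefix, once extended by $b_{k+1},\ldots,b_j$, to the running max of any admissible witness realizing $\mu_R(\bsb b)$; a short calculation shows that ours is pointwise at least as large, so every constraint $b_{j+1}\le\m(\cdot)+1$ inherited from the admissibility of $\bsb b$ continues to hold.

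The main obstacle I foresee is verifying this pointwise domination in the $b_{k+1}=k-1$, $x<k-1$ subcase, where the initial segment of our prefix only reaches max $k-2$ while the witness can reach $k-1$. There I will use that $b_{k+1}=k-1$ itself lifts the running max of our sequence to $k-1$ from position $k+1$ onward, matching the witness from that point on so that the admissibility constraints propagate uniformly.
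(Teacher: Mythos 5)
Your proof is correct and follows essentially the same route as the paper's: the same case split on $\mu_R(\bsb b)$ and $b_{k+1}$, with Proposition 4(2) and Remark 3 handling the singleton cases and the forced running max $k-1$ argument for the case $\mu_R(\bsb b)=k-1$, $b_{k+1}<k-1$. The only difference is that you explicitly construct the witnesses $012\cdots(k-2)x$ and verify that admissibility propagates via the running-max domination argument, a step the paper simply asserts ("$x$ can be any value in $\{0,1,\ldots,k-1\}$"); your version is the more complete one.
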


\proof
If $\bsb{b}=\epsilon$, the result is obvious.\\
For $\bsb{b}\neq\epsilon$, let $x\in \alpha_{R}(\bsb{b})$.\\
If $\mu_{R}(\bsb{b})=k$, by Proposition \ref{x_mu} point 2,
$b_{k+1}=k$ and by Remark \ref{rem:coref}
we have $x=k-1$.\\
If $\mu_{R}(\bsb{b})=k-1$ and $b_{k+1}<k-1$, then 
the maximal value of the statistic $\m$ (defining the set $R_n$)
of a length $k+1$ prefix ending with $b_{k+1}<k-1$ is
$k-1$, and it is reached when $x=k-1$.\\
If $\mu_{R}(\bsb{b})=k-1 \,{\rm and}\, b_{k+1}=k-1$, then there are two 
possibilities for $\mu_{R}(x\bsb{b})$:
\begin{itemize}
\item $\mu_{R}(x\bsb{b})=\mu_{R}(\bsb{b})=k-1$, and as above, 
   this implies $x=k-1$;
\item $\mu_{R}(x\bsb{b})=\mu_{R}(\bsb{b})-1=k-2$, which implies 
   $x\in\{0,1,\ldots,k-2\}$.
\end{itemize}
Finally, if $\mu_{R}(\bsb{b})<k-1$, then $x$ can be any value in $\{0,1,\ldots,k-1\}$.
\endproof

\begin{Pro}
\label{pro:mu_parameter_rgf}
Let $\bsb{b}=b_{k+1}b_{k+2}\ldots b_n$ be an admissible suffix in $R_n$
and $x\in\alpha_{R}(\bsb{b})$. Then

\begin{equation}
\nonumber
\label{eq:mu_Rtilde}
\mu_{R}(x\bsb{b})=\left\{ \begin {array}{lll}

x               & {\rm if} & x\geq \mu_{R}(\bsb{b}), \\
\mu_{R}(\bsb{b})    & {\rm if} & b_{k+1}\leq x< \mu_{R}(\bsb{b})\ {\rm or\ }
                               x<b_{k+1} <\mu_{R}(\bsb{b}),\\
\mu_{R}(\bsb{b})-1  & {\rm if} & x<b_{k+1}=\mu_{R}(\bsb{b}).\\
\end {array}
\right.
\end{equation}
\end{Pro}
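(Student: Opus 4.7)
My plan is to use the identity $\m(s_1\ldots s_k b_{k+1})=\max\{\m(s_1\ldots s_k),b_{k+1}\}$, which gives $\mu_R(\bsb b)=\max\{M^*,b_{k+1}\}$ with $M^*=\min\{\m(s_1\ldots s_k)\,:\,s_1\ldots s_k\bsb b\in R_n\}$, and then treat the three regimes of $x$ separately. When $x\ge b_{k+1}$ (covering the first row and the first half of the second row of the formula), I would directly apply Proposition \ref{x_mu} point 3 to obtain $\mu_R(x\bsb b)=\max\{x,\mu_R(\bsb b)\}$, which equals $x$ when $x\ge\mu_R(\bsb b)$ and equals $\mu_R(\bsb b)$ when $b_{k+1}\le x<\mu_R(\bsb b)$.

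For the remaining middle sub-case $x<b_{k+1}<\mu_R(\bsb b)$, the strict inequality $b_{k+1}<\mu_R(\bsb b)$ forces $M^*=\mu_R(\bsb b)$, and the analysis of $\alpha_R(\bsb b)$ in Proposition \ref{pro:defseq_suff_rgf} then forces $\mu_R(\bsb b)\le k-2$. Hence I can pick a minimizer of $M^*$ of the canonical shape $0,1,\ldots,M^*,0,\ldots,0$ so that $\m(s_1\ldots s_{k-1})=M^*$ and $s_k=0$. Replacing $s_k=0$ by $x$ preserves $\m(s_1\ldots s_{k-1}x)=M^*$, and with it every running maximum $\max\{\m(s_1\ldots s_{k-1}x),b_{k+1},\ldots,b_{j-1}\}$, so $s_1\ldots s_{k-1}x\bsb b\in R_n$, giving $\mu_R(x\bsb b)\le M^*=\mu_R(\bsb b)$. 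The reverse inequality is immediate, because any such $s_1\ldots s_{k-1}x\bsb b\in R_n$ is of the form $s_1\ldots s_k\bsb b\in R_n$ with $\m(s_1\ldots s_k)\ge M^*$.

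For the last row $x<b_{k+1}=\mu_R(\bsb b)$, I construct a length-$k$ RGF $s_1\ldots s_{k-1}x$ with $\m(s_1\ldots s_{k-1}x)=\mu_R(\bsb b)-1$, which is always possible because $\mu_R(\bsb b)-1\le k-1$ is realizable by a length-$k$ RGF (in the boundary case $\mu_R(\bsb b)=k$, Remark \ref{rem:coref} even forces the construction to be the unique sequence $0,1,\ldots,k-1$). The subsequent letter $b_{k+1}=\mu_R(\bsb b)$ then raises the running maximum to $\mu_R(\bsb b)$, matching that of any minimizer of $\mu_R(\bsb b)$, so the remaining $b_{k+2},\ldots,b_n$ extend validly; this yields $\mu_R(x\bsb b)\le\mu_R(\bsb b)-1$. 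The matching lower bound $\mu_R(x\bsb b)\ge\mu_R(\bsb b)-1$ is immediate from the RGF inequality $b_{k+1}\le\m(s_1\ldots s_{k-1}x)+1$.

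The main obstacle is verifying the chain of extension inequalities $b_j\le\max\{\m(s_1\ldots s_{k-1}x),b_{k+1},\ldots,b_{j-1}\}+1$ for $j>k$ after the prefix modification: in the middle sub-case it is automatic because $\m$ is preserved, while in the last sub-case it relies on the fact that the single letter $b_{k+1}=\mu_R(\bsb b)$ restores the running maximum to the value realized by a minimizer of $\mu_R(\bsb b)$, absorbing all slack needed by the subsequent $b_{k+2},\ldots,b_n$.
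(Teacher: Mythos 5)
Your proof is correct and follows essentially the same route as the paper: the identical three-way case split, with the $x\ge b_{k+1}$ cases dispatched via Proposition \ref{x_mu} point 3 (through Proposition \ref{before_x_mu}) and the remaining cases handled by reasoning directly about the statistic $\m$. Where the paper merely asserts the last two cases in one line each, you supply explicit canonical witness prefixes and verify the extension conditions, so your version is a more detailed rendering of the same argument.
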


\proof
The case $x\ge\mu_{R}(\bsb{b})$ is analogous with the similar case in Proposition 
\ref{eq:mu_Atilde}.\\
The next case is equivalent with $x< \mu_{R}(\bsb{b})$ and $b_{k+1}<\mu_{R}(\bsb{b})$,
and since $R_n$ corresponds to the statistic $\m$, the result holds.\\
Finally, if $x<b_{k+1} =\mu_{R}(\bsb{b})$, then 
$\mu_{R}(\bsb{b})=\mu_{R}(x\bsb{b})+1$, and so 
$\mu_{R}(x\bsb{b})=\mu_{R}(\bsb{b})-1$. 
\endproof

\begin{Exam}
Let $k=4$, $n=7$, and $\bsb{b}=b_5b_6b_7=241$ be an admissible
suffix in $R_7$. It follows that $b_{k+1}=2$, $\mu_R(\bsb{b})=3$ and
\begin{itemize} 
\item $\alpha_R(\bsb{b})=\{k-1\}=\{3\}$;
\item $\mu_R(x\bsb b)=x=3$.
\end{itemize}
\end{Exam}

\begin{Pro}
\label{pro:defseq_suff_stair}
Let $\bsb b=\epsilon$ or $\bsb{b}=b_{k+1}b_{k+2}\ldots b_n$ be an admissible suffix in $S_n$.
Then 
\begin{equation}
\nonumber
\alpha_{S}(\bsb{b})=\left\{ \begin {array}{lll}
\{0,1,\ldots,n-1\} & {\rm if} & \bsb{b}=\epsilon,\\
\{k-1\} & {\rm if} & b_{k+1}=k,\\
\{C,C+1,\ldots,k-1\}		& {\rm if} & 0\le b_{k+1}\le k-1,
\end {array}
\right.
\end{equation}
where $C=\max\{0,b_{k+1}-1\}$.
\end{Pro}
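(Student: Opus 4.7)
The plan is to establish this by case analysis on the suffix $\bsb b$, using only two local facts about staircase words: in any staircase word, $s_j\le j-1$ at every position $j$ (by induction from $s_1=0$ and $s_{j+1}\le s_j+1$), and $x\bsb b$ being admissible forces $b_{k+1}\le x+1$. The proposition then follows by extracting the necessary bounds from these two facts and constructing explicit witnesses for achievability.

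First, the case $\bsb b=\epsilon$ is exactly the defining convention $\alpha_S(\epsilon)=\{0,1,\ldots,n-1\}$. Next, for the case $b_{k+1}=k$: since $\lv$ is one of the statistics covered by Remark~\ref{rem:coref}, the fact that the $(k{+}1)$st entry equals $k$ forces $s_i=i-1$ for all $i\le k$, so $x=s_k=k-1$ is the only admissible value, giving $\{k-1\}$ as claimed. Finally, for $0\le b_{k+1}\le k-1$: the transition constraint $b_{k+1}\le x+1$ together with $x\ge 0$ yields $x\ge\max\{0,b_{k+1}-1\}=C$, while $x\le k-1$ comes from the staircase upper-bound on position $k$. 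This establishes $\alpha_S(\bsb b)\subseteq\{C,C+1,\ldots,k-1\}$.

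For the reverse inclusion, I would exhibit for each $x$ in that range an explicit length-$(k-1)$ staircase prefix whose last entry is at least $x-1$, so that appending $x$ preserves the staircase property and reconnects with $b_{k+1}$. A convenient choice is $s_i=\min\{i-1,\max\{0,x-1\}\}$ for $1\le i\le k-1$: each step increases by $0$ or $1$, $s_1=0$, and $s_{k-1}=\max\{0,x-1\}$ since $x\le k-1$. Appending $x$ and then $\bsb b$ yields an element of $S_n$ because $\bsb b$ was already admissible, so its internal transitions $b_{j}\to b_{j+1}$ are staircase-valid.

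The only subtle point is the achievability step, and the key observation that removes any global obstruction is that the staircase constraint is purely local (an adjacent-position inequality), so once $x\ge C$ guarantees compatibility with $b_{k+1}$ and $x\le k-1$ permits some staircase path from $0$ to $x$ within $k-1$ steps, the explicit construction above suffices. No ascent- or maximum-tracking argument, such as those used for Propositions~\ref{pro:defseq_suff} and~\ref{pro:defseq_suff_rgf}, is required here.
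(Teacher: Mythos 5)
Your proof is correct; the paper deliberately omits a proof of this proposition, stating only that it is ``obviously based on the definition of the corresponding sequences,'' and your argument is precisely that direct verification: the two local facts $s_j\le j-1$ and $b_{k+1}\le x+1$ give the containment $\alpha_S(\bsb{b})\subseteq\{C,\ldots,k-1\}$ (with the $b_{k+1}=k$ case handled by Remark~\ref{rem:coref}), and the explicit prefix $s_i=\min\{i-1,\max\{0,x-1\}\}$ supplies the witnesses for the reverse inclusion. Nothing further is needed.
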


\noindent Since $\mu_{S}(\bsb{b})=b_{k+1}$, the next result follows:

\begin{Pro}
\label{pro:mu_parameter_stair}
Let $\bsb{b}=b_{k+1}b_{k+2}\ldots b_n$ be an admissible suffix in $S_n$
and $x\in\alpha_{S}(\bsb{b})$. Then
\begin{equation}
\nonumber
\mu_{S}(x\bsb{b})=x.
\end{equation}
\end{Pro}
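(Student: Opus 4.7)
The plan is to unpack the definition of $\mu_S$ and exploit the fact that the statistic $\lv$ is utterly trivial on any given sequence. By definition,
\[
\mu_S(x\bsb{b}) = \min\{\lv(s_1 s_2\ldots s_{k-1}x) \,|\, s_1 s_2\ldots s_{k-1}x\bsb{b}\in S_n\}.
\]
Since $\lv$ returns the last entry of its argument, we have $\lv(s_1 s_2\ldots s_{k-1}x)=x$ regardless of the choice of $s_1,\ldots,s_{k-1}$. Thus the minimum in the display above is a minimum of a constant function equal to $x$, and so it evaluates to $x$ as soon as the set over which we minimize is nonempty.

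To confirm that the set is nonempty, I would invoke the hypothesis $x\in\alpha_S(\bsb{b})$: by Proposition \ref{pro:defseq_suff_stair} this means exactly that $x\bsb{b}$ is an admissible suffix in $S_n$, hence there exists at least one prefix $s_1 s_2\ldots s_{k-1}$ such that $s_1 s_2\ldots s_{k-1}x\bsb{b}\in S_n$. Combining the two observations yields $\mu_S(x\bsb{b})=x$.

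The only potential obstacle is the temptation to treat this like the corresponding statements for $A_n$ and $R_n$ (Propositions~\ref{eq:mu_Atilde} and~\ref{pro:mu_parameter_rgf}), where the answer genuinely depends on the relative sizes of $x$, $b_{k+1}$ and $\mu_X(\bsb{b})$. For staircase words this case analysis collapses entirely because $\lv$ depends only on the appended letter, which explains both the single-line formula and the authors' decision to omit the proof.
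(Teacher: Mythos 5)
Your proof is correct and matches the paper's (omitted) reasoning exactly: the authors justify the proposition with the one-line remark that $\mu_S(\bsb{b})=b_{k+1}$, i.e., $\mu_S$ of any admissible suffix is its first letter because $\lv$ depends only on the last entry of the prefix, which is precisely your argument. Your explicit check that the minimizing set is nonempty (via $x\in\alpha_S(\bsb{b})$) is a welcome detail the paper leaves implicit.
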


\begin{Exam}
Let $k=6$, $n=9$, and $\bsb{b}=b_7b_8b_9=457$ be an admissible
suffix in $S_9$. So we have $\mu_S(\bsb{b})=4$, $C=\max\{0,b_{k+1}-1\}=\max\{0,3\}=3$, and
\begin{itemize}
\item $\alpha_S(\bsb{b})=\{C,C+1,\ldots,k-1\}=\{3,4,5\}$;
\item $\mu_S(x\bsb b)=x$, where $x\in \{3,4,5\}$.
\end{itemize}
\end{Exam}

\subsection{The bound of Hamming distance between successive sequences in the lists $\SEtilde_n$, 
$\Atilde_n$, $\Rtilde_n$, and $\Stilde_n$}

Now we show that the Hamming distance between two successive
sequences in the mentioned lists is upper bounded by a constant,
which implies that the lists are Gray codes.
These results are embodied in Theorems \ref{The:SEtilde}, \ref{The:ARtilde} and \ref{The:Stilde}.

%

\subsubsection{The list $\SEtilde_n$}
\begin{The}
\label{The:SEtilde}
The list $\SEtilde_n$ is $1$-Gray code.
\end{The}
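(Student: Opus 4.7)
The plan is to mimic the strategy used for $\mathcal{SE}_n$ (Proposition \ref{G_SE}), suitably dualized for the $\prec_c$ order and for suffixes instead of prefixes. The crucial feature of $SE_n$ that makes this work is that, by Proposition \ref{pro:defseqSE_suff}, the defining set of any suffix of length $n-k$ is simply $\{0,1,\ldots,k-1\}$, independently of the suffix itself. So $SE_n$ retains a product structure even when we build sequences from right to left, and the $\prec_c$ order acts on it exactly like a reflected Gray code read in reverse.

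First, I would establish the suffix-analog of Lemma \ref{lem:differ-is-one}: if $\bsb{t}=\widetilde{\mathrm{succ}}_{SE}(\bsb{s})$ and $k$ is the rightmost position where they differ, then $|s_k-t_k|=1$. The argument is the symmetric counterpart of Lemma \ref{lem:differ-is-one}: if $|s_k-t_k|\ge 2$, the sequence $\bsb{u}=0\,0\cdots0\,(s_k\pm1)s_{k+1}\cdots s_n$ (with the appropriate sign) is still in $SE_n$ because each position $j<k$ can freely take the value $0$, and by Definition \ref{De:CoRGC} it lies strictly between $\bsb{s}$ and $\bsb{t}$, contradicting successivity.

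Second, I would prove a suffix-dualized analog of Proposition \ref{pro:RGClast} for $\SEtilde_n$: for any admissible suffix $\bsb{b}=b_{k+1}\ldots b_n$, both $\first(\SEtilde_n\,|\,\bsb{b})$ and $\last(\SEtilde_n\,|\,\bsb{b})$ have a fully deterministic pattern on positions $1,\ldots,k$. Because the defining set at each free position $j$ is $\{0,1,\ldots,j-1\}$ regardless of what sits to the right, one may apply the parity rule of Definition \ref{De:CoRGC} iteratively: at position $j$, the value is $0$ or $j-1$ according to the parity of $\sum_{i>j}s_i+(n-j)$ and whether we are taking the first or the last sequence.

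Finally, for successive $\bsb{s}\prec_c\bsb{t}$ with rightmost differing position $k$, write $\bsb{s}=\last(\SEtilde_n\,|\,s_k\bsb{b})$ and $\bsb{t}=\first(\SEtilde_n\,|\,t_k\bsb{b})$ with $\bsb{b}=s_{k+1}\cdots s_n$ and $|s_k-t_k|=1$. I would prove $s_j=t_j$ for all $j<k$ by downward induction on $j$: at each step, the parity sums $\sum_{i>j}s_i+(n-j)$ and $\sum_{i>j}t_i+(n-j)$ differ only by $s_k-t_k\equiv1\pmod 2$ (using the induction hypothesis that $s_i=t_i$ for $j<i<k$), so they have opposite parities; but opposite parities applied to ``last'' versus ``first'' yield the same choice among $\{0,j-1\}$, hence $s_j=t_j$. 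Combined with the first step, this forces $\bsb{s}$ and $\bsb{t}$ to differ only at position $k$, and by exactly one unit, proving that $\SEtilde_n$ is a $1$-Gray code. The main obstacle is purely bookkeeping the parities through the induction; it is routine because the defining sets in $SE_n$ do not depend on the suffix.
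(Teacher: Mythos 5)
Your proof is correct, but it takes a genuinely different route from the paper's. The paper disposes of this theorem in three lines by reduction: it invokes the known fact (from the Vajnovszki--Vernay reference) that the restriction of the $1$-Gray code ${\mathcal G}_n(n)$ to any product space remains a $1$-Gray code, applies this to an explicitly described product set $V_n=\vartheta_1\times\cdots\times\vartheta_n$ (whose factors depend on the parities of $n$ and of the position $i$), and then observes that the complement-and-reverse transformation defining ${\prec}_c$ from $\prec$ carries the list of $V_n$ onto $\SEtilde_n$ while preserving Hamming distances. You instead give a direct, self-contained argument that mirrors, on suffixes, the machinery the paper builds for the $\prec$ order (Lemma \ref{lem:differ-is-one} and Proposition \ref{pro:RGClast}) and for the other ${\prec}_c$ lists (Propositions \ref{pro:CorefLast} and \ref{pro:CorelSnLast}): the key point you exploit, correctly, is that $\alpha_{SE}(\bsb b)=\{0,1,\ldots,k-1\}$ is independent of the suffix $\bsb b$, so $\first$ and $\last$ of each suffix class are forced position by position, and your parity induction (opposite parities at each $j<k$, because $|s_k-t_k|=1$ is odd, combined with the last-versus-first reversal) correctly yields $s_j=t_j$. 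The paper's route is much shorter but asks the reader to accept an external result and to check the somewhat fiddly identification of $V_n$; yours is longer but elementary, uniform with the rest of the paper's toolkit, and makes explicit that successive sequences differ by $\pm 1$ in the single differing position, a refinement the paper's statement does not record.
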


\proof
The result follows from the fact that 
the restriction of the 1-Gray code list $\mathcal G_n(n)$ to any product space remains a 1-Gray code (see \cite{VajnovVernay}), in particular to the set
$$V_n=\vartheta_1 \times \vartheta_2 \times \ldots  \times \vartheta_n,$$ where
\begin{itemize}
\item $\vartheta_i=\{0,1,...,n-i\}$, if $n$ is odd and $i$ is even, or
\item $\vartheta_i=\{i-1,i, ...,n-1\}$, if $n$ is even, or $n$ and $i$ are both odd.
\end{itemize}
Then by applying to each sequence $\bsb s$ in the list $\mathcal V_n$ the two transforms mentioned before Definition \ref{De:CoRGC}, namely:
\begin{itemize}
\item complementing each digit in $\bsb s$ if $n$ is even, or only digits in odd positions
if $n$ is odd, then
\item reversing the obtained sequence,
\end{itemize}
the desired $1$-Gray code for the set $SE_n$ in $\prec_c$ order is obtained.
\endproof

\subsubsection{The lists $\Atilde_n$ and $\Rtilde_n$}
The next proposition describes the pattern of 
$\bsb s=\last(\Ytilde_n\,|\,{\bsb b})$ and $\bsb s=\first(\Ytilde_n\,|\,{\bsb b})$, 
where $\Ytilde_n$ is one of the lists $\Atilde_n$ or $\Rtilde_n$.

\begin{Pro}
\label{pro:CorefLast}
Let $Y_n$ be one of the sets $A_n$ or $R_n$, and 
${\bsb b}=b_{k+1}b_{k+2}\ldots b_n$ be an admissible suffix in $Y_n$.
If $\bsb s=\last(\Ytilde_n\,|\,{\bsb b})$ or $\bsb s=\first(\Ytilde_n\,|\,{\bsb b})$, 
then $\bsb s$ has one of the following patterns:
\begin{itemize}
  \item $\bsb s=012\ldots (k-2)(k-1){\bsb b}$, or
  \item $\bsb s=012\ldots(k-2)0{\bsb b}$.		
\end{itemize}	
\end{Pro}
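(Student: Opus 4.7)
The plan is to determine, position by position from right to left, the prefix of $\first(\Ytilde_n\,|\,\bsb b)$ and $\last(\Ytilde_n\,|\,\bsb b)$. The main tools are: the $\prec_c$ ordering rule of Definition \ref{De:CoRGC} applied inside sub-lists of common suffix; the characterizations of the defining set $\alpha_Y(\bsb b)$ in Propositions \ref{pro:defseq_suff} and \ref{pro:defseq_suff_rgf}; and Remark \ref{rem:coref}, which collapses the preceding positions of the prefix as soon as a ``large'' value is placed.

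First I would observe, directly from Definition \ref{De:CoRGC}, that inside $\Ytilde_n\,|\,\bsb b$ the sub-lists $\Ytilde_n\,|\,x\bsb b$ for $x\in\alpha_Y(\bsb b)$ are arranged in decreasing or increasing order of $x$ according to the parity of $P(\bsb b):=\sum_{i=k+1}^{n}b_i+(n-k)$. Next, Propositions \ref{pro:defseq_suff} and \ref{pro:defseq_suff_rgf} leave two possibilities: either $\alpha_Y(\bsb b)=\{k-1\}$, or $\min\alpha_Y(\bsb b)=0$ and $\max\alpha_Y(\bsb b)=k-1$. In the degenerate case $s_k=k-1$ is forced, and Remark \ref{rem:coref} immediately gives $\bsb s=012\ldots (k-1)\bsb b$, i.e.\ pattern~1. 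In the non-degenerate case, both $\first(\Ytilde_n\,|\,\bsb b)$ and $\last(\Ytilde_n\,|\,\bsb b)$ must lie in one of the extremal sub-lists $\Ytilde_n\,|\,(k-1)\bsb b$ or $\Ytilde_n\,|\,0\bsb b$, the choice being dictated by the parity of $P(\bsb b)$. If $s_k=k-1$, Remark \ref{rem:coref} again yields pattern~1. If $s_k=0$, I recurse into $\Ytilde_n\,|\,0\bsb b$: since $P(0\bsb b)=P(\bsb b)+1$, the parity flips, so whether we seek $\first$ or $\last$, the new extremal sub-list at position $k-1$ corresponds to the \emph{largest} admissible value. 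The formulas in Propositions \ref{pro:defseq_suff}/\ref{pro:defseq_suff_rgf}, combined with Propositions \ref{eq:mu_Atilde}/\ref{pro:mu_parameter_rgf} used to express $\mu_Y(0\bsb b)$ in terms of $\mu_Y(\bsb b)$ and $b_{k+1}$, confirm that this largest value is exactly $k-2$. Setting $s_{k-1}=k-2$ triggers Remark \ref{rem:coref} a final time, forcing $s_i=i-1$ for $1\le i\le k-1$, and we obtain $\bsb s=012\ldots (k-2)0\bsb b$, which is pattern~2.

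The main obstacle is confirming that the recursion stops after exactly one step down, i.e.\ that $\max\alpha_Y(0\bsb b)=k-2$ in every non-degenerate sub-case, for both $Y=A$ and $Y=R$. This reduces to a short case split according to whether $\mu_Y(\bsb b)=k-1$ or $\mu_Y(\bsb b)<k-1$, together with the mild edge conditions $k=2$ and $b_{k+1}=0$; in each case Propositions \ref{pro:defseq_suff}/\ref{pro:defseq_suff_rgf} yield either $\alpha_Y(0\bsb b)=\{k-2\}$ or $\alpha_Y(0\bsb b)=\{0,1,\ldots,k-2\}$, both of which have maximum $k-2$. Once this bookkeeping is dispatched, the parity flip closes the argument after a single recursion step and precludes any prefix pattern other than the two listed.
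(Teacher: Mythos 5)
Your proposal is correct and follows essentially the same route as the paper's proof: read off from Propositions \ref{pro:defseq_suff} and \ref{pro:defseq_suff_rgf} that $s_k$ must be $k-1$ or $0$, dispatch the first case with Remark \ref{rem:coref}, and in the second case use the parity flip of $\sum_{i}b_i+(n-k)$ under prepending $0$ to force $s_{k-1}=\max\alpha_Y(0\bsb b)=k-2$ before invoking Remark \ref{rem:coref} again. Your explicit verification that $\max\alpha_Y(0\bsb b)=k-2$ in every sub-case is a detail the paper leaves implicit, but it does not change the argument.
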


\proof
Let ${\bsb s}=s_1s_2\ldots s_kb_{k+1}b_{k+2}\ldots b_n$.
Since $\bsb{s}=\last(\Ytilde_n\,|\,{\bsb b})$ or $s=\first(\Ytilde_n\,|\,{\bsb b})$,
according to $\alpha_Y(\bsb b)$ given in Propositions 
\ref{pro:defseq_suff} and \ref{pro:defseq_suff_rgf},
it follows that $s_k\in\{0,k-1\}$. In other words, $s_k$ is either the smallest or the largest value in $\alpha_Y(\bsb b)$.\\
\noindent If $s_k=k-1$, then by Remark \ref{rem:coref} we have 
$\bsb s=012\ldots (k-2)(k-1){\bsb b}$.

\noindent 
If $s_k=0$, then considering the definition of $\prec_c$ order we have either
\begin{itemize}
\item $\bsb{s}=\first(\Ytilde_n\,|\,{\bsb b})$ and 
      $\sum_{i=k+1}^{n}b_i+(n-k)$ is odd, or
\item $\bsb{s}=\last(\Ytilde_n\,|\,{\bsb b})$ and 
      $\sum_{i=k+1}^{n}b_i+(n-k)$ is even.
\end{itemize}
For the first case, again by the definition of $\prec_c$ order,
it follows that $s_{k-1}$ must be the largest value in 
$\alpha_Y(0\bsb{b})$, and so $s_{k-1}=k-2$, and 
by Remark \ref{rem:coref}, $\bsb s=012\ldots (k-2)0{\bsb b}$.
Similarly, the same result is obtained for the second case.
\endproof

A direct consequence of the previous proposition is 
the next theorem.

\begin{The}
\label{The:ARtilde}
The lists $\Atilde_n$ and $\Rtilde_n$ are $2$-adjacent Gray codes.
\end{The}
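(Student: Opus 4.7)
The plan is to reduce the statement to a direct application of Proposition \ref{pro:CorefLast}. Let $\bsb{s},\bsb{t}\in Y_n$ be successive in $\Ytilde_n$ with $\bsb{t}=\widetilde{\suc}_Y(\bsb{s})$, and let $k$ be the rightmost position where they differ, so that $s_i=t_i$ for all $i>k$ and $s_k\neq t_k$. Since any $\st$-restricted growth sequence starts with $0$, we must have $k\geq 2$.

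My first step would be to argue that $\bsb{s}=\last(\Ytilde_n\,|\,s_k s_{k+1}\ldots s_n)$ and, symmetrically, $\bsb{t}=\first(\Ytilde_n\,|\,t_k t_{k+1}\ldots t_n)$. Indeed, any $\bsb{u}\neq\bsb{s}$ sharing the suffix $s_k s_{k+1}\ldots s_n$ with $\bsb{s}$ differs from $\bsb{s}$ only at positions strictly less than $k$, and hence differs from $\bsb{t}$ at exactly the rightmost position $k$. The $\prec_c$ comparison of $\bsb{u}$ with $\bsb{t}$ depends only on the parity of $\sum_{i=k+1}^{n}u_i+(n-k)$ and on $u_k$, both of which coincide with the corresponding quantities for $\bsb{s}$. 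Therefore $\bsb{u}\prec_c\bsb{t}$ iff $\bsb{s}\prec_c\bsb{t}$, and if in addition $\bsb{s}\prec_c\bsb{u}$ then $\bsb{s}\prec_c\bsb{u}\prec_c\bsb{t}$, contradicting $\bsb{t}=\widetilde{\suc}_Y(\bsb{s})$.

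Next, I would apply Proposition \ref{pro:CorefLast} to the admissible suffixes $s_k s_{k+1}\ldots s_n$ and $t_k t_{k+1}\ldots t_n$, both of length $n-k+1$. Reading the proposition with its index $k$ replaced by $k-1$, the length-$(k-1)$ prefix of $\bsb{s}$ is either $012\ldots(k-3)(k-2)$ or $012\ldots(k-3)0$, and the same alternative holds for $\bsb{t}$. In every combination, positions $1,2,\ldots,k-2$ of $\bsb{s}$ and $\bsb{t}$ coincide (they all equal the identity $012\ldots(k-3)$), positions $k+1,\ldots,n$ coincide by the choice of $k$, and the two sequences may differ only at the adjacent positions $k-1$ and $k$. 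This yields Hamming distance at most $2$ on adjacent positions, which is precisely the $2$-adjacent Gray code property.

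The main subtlety is the successor-contradiction argument in the second paragraph; after that, the conclusion follows immediately from the extremal-pattern description in Proposition \ref{pro:CorefLast}. The boundary case $k=2$, where the length-$(k-1)$ prefix reduces to the single symbol $0$ and the two patterns collapse, is covered uniformly by the same reasoning.
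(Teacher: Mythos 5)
Your proof is correct and takes essentially the same route as the paper: identify $\bsb s$ and $\bsb t$ as $\last$ and $\first$ of their respective suffix classes and then read off the two possible prefix patterns from Proposition \ref{pro:CorefLast}, so that only the two adjacent positions $k-1$ and $k$ (in your indexing) can differ. The only difference is that you explicitly justify the $\last$/$\first$ identification via the successor-contradiction argument, a step the paper asserts without proof.
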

\proof
Let 
$\bsb s,\bsb t \in Y_n$, with $\bsb t=\tildesuc_Y(\bsb s)$.
If $k+1$ is the rightmost position where $\bsb s$ and $\bsb t$ differ,
then there are admissible suffixes  
${\bsb b}=b_{k+1}b_{k+2}\ldots b_n$ and ${\bsb b'}=b'_{k+1}b_{k+2}\ldots b_n$
in $Y_n$ such that $\bsb s=\last(\Ytilde_n|{\bsb b})$ and 
$\bsb t=\first(\Ytilde_n|{\bsb b'})$.

\noindent By Proposition \ref{pro:CorefLast}, $\bsb s$ has pattern
\begin{equation}
\nonumber
\begin{array}{l}
		012\ldots (k-2)(k-1){\bsb b}, {\rm \,\,or\ }\\
		012\ldots(k-2)0{\bsb b};
\end{array}
\end{equation}
and $\bsb t$ has pattern
\begin{equation}
\nonumber
\begin{array}{l}
		012\ldots (k-2)(k-1){\bsb b'}, {\rm \,\,or\ }\\
		012\ldots(k-2)0{\bsb b'}.
\end{array}
\end{equation}
And in any case, $\bsb s$ and $\bsb t$ differ in position 
$k+1$ and possibly in position $k$.

%
%
\endproof

\subsubsection{The list $\Stilde_n$}
The next proposition gives the pattern of 
$\last(\Stilde_n\,|\,{\bsb b})$ and 
$\first(\Stilde_n\,|\,{\bsb b})$ for an admissible suffix 
$\bsb b$ in $S_n$.

\begin{Pro}
\label{pro:CorelSnLast}
Let ${\bsb b}=b_{k+1}b_{k+2}\ldots b_n$ be an admissible suffix in $S_n$.
If $\bsb s=\last(\Stilde_n\,|\,{\bsb b})$, 
then the pattern of $\bsb s$ is given by:

\begin{itemize}
\item if $b_{k+1}=k$\ or\ $\sum_{i=k+1}^{n}b_i+(n-k)$ is odd, then
\begin{equation} \nonumber
\bsb s=012\ldots (k-2)(k-1){\bsb b};
\end{equation}
\item if $b_{k+1}< k$ and $\sum_{i=k+1}^{n}b_i+(n-k)$ is even, and
either $b_{k+1}=0$ or $b_{k+1}$ is odd, then
\begin{equation} \nonumber
\bsb s=012\ldots (k-2)(\max\{0,b_{k+1}-1\}){\bsb b};
\end{equation}
\item if $b_{k+1}< k$ and $\sum_{i=k+1}^{n}b_i+(n-k)$ is even, and
      $b_{k+1}>0$ is even, then
\begin{equation} \nonumber
\bsb s=012\ldots (k-3)(b_{k+1}-2)(b_{k+1}-1){\bsb b}.
\end{equation}
\end{itemize}
Similar results hold for $\bsb s=\first(\Stilde_n\,|\,{\bsb b})$ by replacing `odd' by `even', and {\em vice versa}, for the parity of $\sum_{i=k+1}^{n}b_i+(n-k)$.
\end{Pro}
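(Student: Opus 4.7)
My plan is to construct $\bsb s = \last(\Stilde_n\,|\,\bsb b)$ digit by digit, from right to left. At each step the available values for the current position are given by Proposition \ref{pro:defseq_suff_stair} applied to the suffix built so far, and Definition \ref{De:CoRGC} prescribes whether to pick the largest or the smallest of them according to the parity of the current tail sum. As soon as some $s_j$ reaches $j-1$, Remark \ref{rem:coref} (which applies to $\lv$) collapses the remaining prefix to $0,1,\ldots,j-1$, and the construction terminates.

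The first pattern is the easy case. If $b_{k+1} = k$, then Proposition \ref{pro:defseq_suff_stair} gives $\alpha_S(\bsb b) = \{k-1\}$, forcing $s_k = k-1$. If instead $b_{k+1} < k$ and $\sum_{i=k+1}^n b_i + (n-k)$ is odd, then by Definition \ref{De:CoRGC} the last sequence must take the largest value of $\alpha_S(\bsb b) = \{C,\ldots,k-1\}$, where $C = \max\{0,b_{k+1}-1\}$; again $s_k = k-1$. In either case Remark \ref{rem:coref} yields $\bsb s = 012\ldots(k-1)\bsb b$.

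Now assume $b_{k+1} < k$ and the tail sum is even, so that Definition \ref{De:CoRGC} forces $s_k = C$. I then extend the suffix to $C\bsb b$ and compute the new tail sum, which differs from the old one by $C+1$, and hence has parity equal to that of $C+1$. A short case analysis on $b_{k+1}$ follows. If $b_{k+1}=0$ or $b_{k+1}$ is odd, then $C$ is even, the new parity is odd, and $s_{k-1}$ is the largest element of $\alpha_S(C\bsb b) = \{\max\{0,C-1\},\ldots,k-2\}$, namely $k-2$; Remark \ref{rem:coref} then forces $\bsb s = 012\ldots(k-2)(C)\bsb b$, the second pattern. If instead $b_{k+1}$ is even and positive, then $C = b_{k+1}-1$ is odd, the new parity stays even, so $s_{k-1}$ is the smallest admissible value, namely $b_{k+1}-2$. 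A further leftward step is now needed: the tail sum picks up $b_{k+1}-1$ (an odd amount), flipping parity to odd, so $s_{k-2}$ is the largest element of $\alpha_S\bigl((b_{k+1}-2)(b_{k+1}-1)\bsb b\bigr) = \{\max\{0,b_{k+1}-3\},\ldots,k-3\}$, i.e.\ $k-3$. Remark \ref{rem:coref} finally closes the sequence to $\bsb s = 012\ldots(k-3)(b_{k+1}-2)(b_{k+1}-1)\bsb b$, the third pattern.

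The case $\bsb s = \first(\Stilde_n\,|\,\bsb b)$ is settled by the same argument with the roles of odd and even tail-sum parities interchanged throughout, as stated. The main delicate point of the proof is the parity bookkeeping for the incremented tail sum at each leftward step, together with the key observation that after at most two ``smallest-value'' choices the next admissible range contains a maximum of the form $j-1$, so Remark \ref{rem:coref} can finish the construction and keep the list of patterns finite.
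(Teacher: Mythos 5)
Your proof is correct and follows essentially the same route as the paper's: build $\bsb s$ from right to left, letting the parity of the tail sum $\sum_{i=k+1}^{n}b_i+(n-k)$ (incremented by $s_j+1$ at each leftward step) dictate whether to take the largest or smallest element of $\alpha_S$ of the current suffix, and invoking Remark \ref{rem:coref} once some position $j$ receives the value $j-1$. If anything, you are slightly more explicit than the paper in the third case, where you carry the construction one further step to get $s_{k-2}=k-3$ before closing with Remark \ref{rem:coref}.
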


\proof
\noindent
Let $\bsb s=s_1s_2\ldots s_kb_{k+1}\ldots b_n=\last(\Stilde_n\,|\,{\bsb b})$. \\
If $b_{k+1}=k$ or $\sum_{i=k+1}^{n}b_i+(n-k)$ is odd, then 
$s_k$ is the largest value in $\alpha_S(\bsb b)$, so $s_k=k-1$,
and by Remark \ref{rem:coref}, 
	      $s_i=i-1$ for $1\le i\le k$. So the first case holds.\\    
If $\sum_{i=k+1}^{n}b_i+(n-k)$ is even and $b_{k+1}< k$, then 
$s_k$ is the smallest value in $\alpha_S(\bsb b)$, 
namely $\max\{0,b_{k+1}-1\}$, which is even if $b_{k+1}=0$ or $b_{k+1}$ is odd.
Thus, by the definition of $\prec_c$ order, $s_{k-1}$
is the largest value in $\alpha_S(s_k\bsb b)$, which is $k-2$,
and by Remark \ref{rem:coref}, the second case holds.\\
For the last case, as above, $s_k=\max\{0,b_{k+1}-1\}$, and considering $b_{k+1}>0$ and even, it follows that 
$s_k=b_{k+1}-1$ is odd. Thus $s_{k-1}$ is the minimal value in 
$\alpha_S(s_k\bsb b)$, that is $b_{k+1}-2$, which in turns is even,
and the last case holds.\\
The proof for the case  $\bsb s=\first(\Stilde_n\,|\,{\bsb b})$ is similar.
\endproof

\begin{The}
\label{The:Stilde}
The list $\Stilde_n$ is $3$-adjacent Gray codes.
\end{The}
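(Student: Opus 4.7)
The plan is to mimic the proof of Theorem \ref{The:ARtilde}, but now using the three-pattern description of Proposition \ref{pro:CorelSnLast}. Concretely, I would take $\bsb s,\bsb t\in S_n$ with $\bsb t=\tildesuc_S(\bsb s)$, let $k+1$ be the rightmost position where they differ, and observe that there are admissible suffixes $\bsb b=b_{k+1}b_{k+2}\ldots b_n$ and $\bsb b'=b'_{k+1}b_{k+2}\ldots b_n$ in $S_n$ (agreeing beyond position $k+1$) with $\bsb s=\last(\Stilde_n\,|\,\bsb b)$ and $\bsb t=\first(\Stilde_n\,|\,\bsb b')$. Note that by definition $b_{k+1}\ne b'_{k+1}$, so position $k+1$ genuinely contributes to the Hamming distance.

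Next I would invoke Proposition \ref{pro:CorelSnLast} (and the analogous statement for $\first$, obtained by swapping the parity conditions) to write down all possible forms of $\bsb s$ and $\bsb t$. The essential observation is that in every one of the three patterns, the prefix of length $k-2$ is literally $0\,1\,2\ldots(k-3)$: case~1 has $s_i=i-1$ for $1\le i\le k$, case~2 has $s_i=i-1$ for $1\le i\le k-1$, and case~3 has $s_i=i-1$ for $1\le i\le k-2$. Consequently $\bsb s$ and $\bsb t$ must coincide on positions $1,2,\ldots,k-2$, regardless of which of the nine case combinations for $(\bsb s,\bsb t)$ actually occurs. Combined with the fact that they also coincide on positions $k+2,\ldots,n$, this confines the differing positions to the contiguous window $\{k-1,k,k+1\}$.

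From here the result follows: the Hamming distance is at most three, and the positions that differ are adjacent, so $\Stilde_n$ is a $3$-adjacent Gray code. I would spend one extra sentence noting that small values of $k$ (i.e.\ $k=1$ or $k=2$) are harmless, since then the window $\{k-1,k,k+1\}$ simply shrinks within $\{1,\ldots,n\}$ and some of the three patterns are not available anyway.

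The only place where care is needed is justifying the common-prefix claim. This is not really an obstacle, just bookkeeping: one must check that in case~3 the prefix $0\,1\,2\ldots(k-3)$ is indeed present (which it is, since the pattern is stated as $0\,1\,2\ldots(k-3)(b_{k+1}-2)(b_{k+1}-1)\bsb b$), and that in the mixed cases one is not forced by $\prec_c$ to differ further to the left. The latter is automatic because those left positions are already the unique admissible fillers dictated by Remark~\ref{rem:coref}, so they cannot vary. Once this is stated, the theorem drops out.
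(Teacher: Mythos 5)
Your overall strategy is the paper's: identify $\bsb s=\last(\Stilde_n\,|\,\bsb b)$ and $\bsb t=\first(\Stilde_n\,|\,\bsb b')$ for the rightmost differing position $k+1$, read off their shapes from Proposition \ref{pro:CorelSnLast} (and its \emph{first}-version with parities swapped), note that all three patterns force $s_i=t_i=i-1$ for $i\le k-2$, and conclude that the differences are confined to positions $k-1$, $k$, $k+1$. This correctly yields the upper bound $3$ on the Hamming distance, and your remark about small $k$ is harmless, as you say.

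The gap is in the final step, where you write that ``the positions that differ are adjacent, so $\Stilde_n$ is a $3$-adjacent Gray code.'' Confinement to the window $\{k-1,k,k+1\}$ is not the same as adjacency in the sense of the paper's definition: you must still exclude the configuration in which $s_{k-1}\neq t_{k-1}$ while $s_k=t_k$, for then the differing positions would be $\{k-1,k+1\}$, which are not adjacent. The paper closes exactly this hole by checking, over all combinations of the patterns of Proposition \ref{pro:CorelSnLast} for $\bsb s$ and $\bsb t$, that $s_k\neq t_k$ if and only if $s_{k-1}\neq t_{k-1}$ (whence the differing positions are either $\{k+1\}$ alone or all of $\{k-1,k,k+1\}$). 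This is the ``routine'' case analysis that your write-up asserts rather than performs; without it you have proved that $\Stilde_n$ is a $3$-Gray code, but not that it is $3$-adjacent. Everything else in the proposal is sound and matches the paper's argument.
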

\proof
Let ${\bsb t}=\tildesuc_S(\bsb s)$, and
$k+1$ be the rightmost position where ${\bsb s}$ and ${\bsb t}$
differ. Let us denote by $\bsb b$ the length $(n-k)$ suffix
of ${\bsb s}$ and $\bsb b'$ that of ${\bsb t}$;
so, $\bsb s=\last({\bsb b}\,|\,\mathcal S_n)$ and
$\bsb t=\first({\bsb b'}\,|\,\mathcal S_n)$. It follows by Proposition \ref{pro:CorelSnLast}, that
$s_i=t_i=i-1$ for all $i\leq k-2$, and so the other differences possibly occur in position $k$ and in position $k-1$. 

Considering all valid combinations for
   $\bsb s$ and $\bsb t$ as given in Proposition \ref{pro:CorelSnLast},
   the proof of the adjacency is routine, and based
   on the following: $s_k\neq t_k$ if and only if
   $s_{k-1}\neq t_{k-1}$.
   It follows that $\bsb s$ and $\bsb t$ differ in one position,
   or three positions which are adjacent.

\endproof

In addition, the lists $\Atilde_n$, $\Rtilde_n$, and $\Stilde_n$, are circular Gray codes. This is a consequence of the following remarks based on Propositions \ref{pro:CorefLast} and \ref{pro:CorelSnLast}:
\begin{itemize}
\item $\first(\Ytilde_n)=012\ldots (n-2)(n-1)$;
\item $\last(\Ytilde_n)=012\ldots (n-2)0$;
\end{itemize}
where $\Ytilde_n$ is one of the lists $\Atilde_n$, $\Rtilde_n$, or $\Stilde_n$.

\subsection{Generating algorithm for $\SEtilde_n$, $\Atilde_n$, 
$\Rtilde_n$, and $\Stilde_n$}

Here we explain algorithm {\tt Gen2} in Figure \ref{fig:Gen2} 
which generates suffix partitioned Gray codes for restricted growth sequences;
according to particular instances of the functions called by 
it, {\tt Gen2} produces the list $\SEtilde_n$, $\Atilde_n$, 
$\Rtilde_n$, or $\Stilde_n$.
Actually, for convenience, {\tt Gen2} produces length $(n+1)$ sequences
${\bsb s}=s_1s_2\ldots s_{n+1}$ with $s_{n+1}=0$, and so, neglecting the last value in each 
sequence $\bsb s$ the desired list is obtained.
Notice that with this dummy value for $s_{n+1}$ we have
$\mu_X(s_ks_{k+1}\ldots s_n)=\mu_X(s_ks_{k+1}\ldots s_n0)$,
for $k\leq n$, and similarly for $\alpha_X$.

In {\tt Gen2}, the sequence $\bsb s$ is a global variable, and initialized by 
$01\ldots (n-1)0$, which is the first length $n$ sequence in $\prec_c$ order, followed by a $0$;
and  the main call is {\tt Gen2}$(n+1,0,0,0)$. 
Procedure {\tt Gen2} has the following parameters
(the first three of them are similar with those of procedure 
{\tt Gen1}):
\begin{itemize}
\item $k$, the position in the sequence $\bsb s$ which is updated by the current call;
\item $x$, the value to be assigned to $s_k$;
\item $dir$, gives the direction in which $s_{k-1}$ covers 
      $\alpha_X(s_ks_{k+1}\ldots s_n0)$,
      the defining set of the current suffix;      
\item $v$, the value of $\mu_X(s_{k+1}\ldots s_n0)$.
\end{itemize}

\noindent
The functions called by {\tt Gen2} are given in Figures \ref{fig:f-ARtilde} 
and \ref{fig:f-S-SEtilde}. They are principally 
based on the evaluation of $\alpha_X$ and $\mu_X$ for 
the current suffix of $\bsb s$, and are:
\begin{itemize}
\item ${\tt Mu\_}X(k,x,v)$ returns the value of $\mu_X(s_ks_{k+1}\ldots s_n0)$, with
      $x=s_k$. 
\item ${\tt IsDegreeOne\_}X(k)$ stops the recursive calls
when $\alpha_X(s_ks_{k+1}\ldots s_n0)$ has only one
element, namely $k-2$.
In this case, by Remark \ref{rem:coref}
the sequence is
uniquely determined by the current suffix, and
this prevents {\tt Gen2} to produce degree one calls.
In addition, ${\tt IsDegreeOne\_}X(k)$ sets appropriatelly $d-1$ values at
the left of $s_k$, where $d$ is the upper bound of the Hamming distance in the
list (changes at the left of $s_1$ are considered with no effect).
This can be considered as a {\it Path Elimination Technique} \cite{Rus}.
\item ${\tt Lowest\_}X(k)$ is called when {\tt IsDegreeOne\_}$X$ returns {\tt false},
      and gives the lowest value in $\alpha_X(s_ks_{k+1}\ldots s_n0)$.
\item ${\tt SecLargest\_}X(k,u)$, is called when {\tt IsDegreeOne\_}$X$ returns {\tt false}, 
      and gives the second largest value in 
      $\alpha_X(s_ks_{k+1}\ldots s_n0)$ (the largest value being 
      always $k-2$).

\end{itemize}
 

\noindent
By this construction, algorithm {\tt Gen2} has no degree one calls and it
satisfies the CAT principle.
Figure \ref{fig:tree_suff} shows the tree induced by the algorithm when
generates $\Atilde_4$.

\begin{figure}[h]
\centering
\fbox{
\begin{minipage}{0.8\linewidth} 
\tt
\begin{tabbing}
procedure Gen2$(k,x,dir,v)$\\
global $n$,${\bsb s}$;\\
$s_k:=x$;\\
$u:={\tt Mu\_}X(k,x,v)$;\\
if IsDegreeOne\_$X(k,u)$\\
then Print ${\bsb s}$;\\
else \=$c$:=Lowest\_$X(k)$;\\
	       \>$d$:=SecLargest\_$X(k,u)$;\\
	       \>if $dir\mod 2=1$\\
	       \>then \=for \=$i:=c$ to $d$ do\\
	       \>	       \>	     \>Gen2$(k-1,i,i,u)$;\\
	       \>	       \>enddo\\
	       \>endif\\
	       \>Gen2$(k-1,k-2,(k-1-dir)\mod 2,u)$;\\
	       \>if $dir\mod 2=0$\\
	       \>then \=for \=$i:=d$ downto $c$ do\\
	       \>	       \>	     \>Gen2$(k-1,i,i+1,u)$;\\
	       \>	       \>enddo\\
	       \>endif\\
endif\\
end.
\end{tabbing}
\end{minipage}
}
\caption{Algorithm {\tt Gen2}, generating the list $\Xtilde_n$.}
\label{fig:Gen2}
\end{figure}


\begin{figure}[t]
\centering
\begin{tabular}{cc}

	\fbox{
	\begin{minipage}[t][13cm]{7cm}
	\tt
	\begin{tabbing}
	fu\=nction Mu\_A$(m,i,w)$\\
\>		if $i\ge w$ \\
\>		then return $i$;\\
\>		else \=if $i\ge s_{m+1}$\\
\>			\>then return $w$;\\
\>		        \>else return $w-1$;\\
\>			\> endif\\
\>		endif\\
		end.\\
		function IsDegreeOne\_A$(m,v)$\\
\>		if \=$v=m-1$ or\\
\>			\>$(v=m-2$ and $s_m=0)$\\
\>		then \=$s_{m-1}:=m-2$;\\
\>			\>return {\rm true};\\
\>		else return {\rm false};\\
\>		endif\\
	end.\\

	function Lowest\_A$(m)$\\	
\>	return 0;\\
	end.\\

	function SecLargest\_A$(m,w)$\\
\>	if \=$w=m-2$ and $s_m>0$\\
\>   \> and $s_m<m-1$\\
\>	then return $s_m-1$;\\
\>	else return $m-3$;\\
\>	endif\\
	end.\\
	\phantom{text text text text text text}
	\end{tabbing}
	\end{minipage}
	}
 & 
\fbox{
	\begin{minipage}[t][13cm]{7cm}
	\tt
	\begin{tabbing}
	fu\=nction Mu\_R$(m,i,w)$\\
\>		if $i\ge w$ \\
\>		then return $i$;\\
\>		else \=if $s_{m+1}<w$\\
\>				\>then return $w$;\\
\>				\>else return $w-1$;\\
\>				\>endif\\
\>		endif\\
	end.\\

	function IsDegreeOne\_R$(m,v)$\\
\>	if \=$v=m-1$ or\\
\>		\>$(v=m-2$ {\tt and} $s_{m}<m-2)$\\
\>	then \=$s_{m-1}:=m-2$;\\
\>			\>return {\rm true};\\
\>	else return {\rm false};\\
\>	endif\\
	end.\\

	function Lowest\_R$(m)$\\	
\>	return 0;\\
	end.\\

	function SecLargest\_R$(m,w)$\\
\>	 return $m-3$;\\
	end.\\
	\phantom{text text text text text text}
	\end{tabbing}	
	\end{minipage}
	}
\\
(a) & (b)
\end{tabular}
\caption{Particular functions called by {\tt Gen2}, generating the lists: (a) $\Atilde_n$, and (b) 
$\Rtilde_n$.}
	\label{fig:f-ARtilde}
\end{figure}

\begin{figure}[t]
\centering
\begin{tabular}{cc}
	\fbox{
	\begin{minipage}[t][9.5cm]{7cm}
	\tt
	\begin{tabbing}
	fu\=nction Mu\_S$(m,i,w)$\\
\>		return $i$;\\
		end.\\

	function IsDegreeOne\_S$(m,v)$\\
\>	if $s_m=m-1$\\
\>	then \=$s_{m-1}:=m-2$;\\
\>			\>$s_{m-2}:=m-3$;\\
\>			\>return {\rm true};\\
\>	else return {\rm false};\\
\>	endif\\
	end.\\

	function Lowest\_S$(m)$\\
\>	if \=$s_{m}>1$ and $s_m\le m-2$\\

\>	then return $s_m-1$;\\
\>	else return 0;\\
\>	endif\\
	end.\\

	function SecLargest\_S$(m,w)$\\
\>	return $m-3$;\\
	end.\\
	\phantom{text text text text text te}
	\end{tabbing}
	\end{minipage}
	}
	\label{fig:f-Stilde}
&
	\fbox{
	\begin{minipage}[t][9.5cm]{7cm}
	\tt
	\begin{tabbing}
		fu\=nction Mu\_{SE}$(m,i,w)$\\
\>		if $m=n$\\
\>		then return $n-1$;\\
\>		else return $w-1$;\\
		end.\\

	function IsDegreeOne\_{SE}$(m,v)$\\
\>	if $m=2$\\
\>	then return {\rm true};\\
\>	else return {\rm false};\\
	end.\\

	function Lowest\_SE$(m)$\\	
\>	return 0;\\
	end.\\

	function SecLargest\_SE$(m,w)$\\
\>	return $m-3$;\\
	end.\\
	\phantom{text text text text text te}
	\end{tabbing}	
	\end{minipage}
	}
\\
(a) & (b)
\end{tabular}
\caption{Particular functions called by {\tt Gen2}, generating the lists: (a) $\Stilde_n$, and 
(b) $\SEtilde_n$.}	
\label{fig:f-S-SEtilde}
\end{figure}

\begin{figure}[h]
\begin{center}\includegraphics[width=8.5cm]{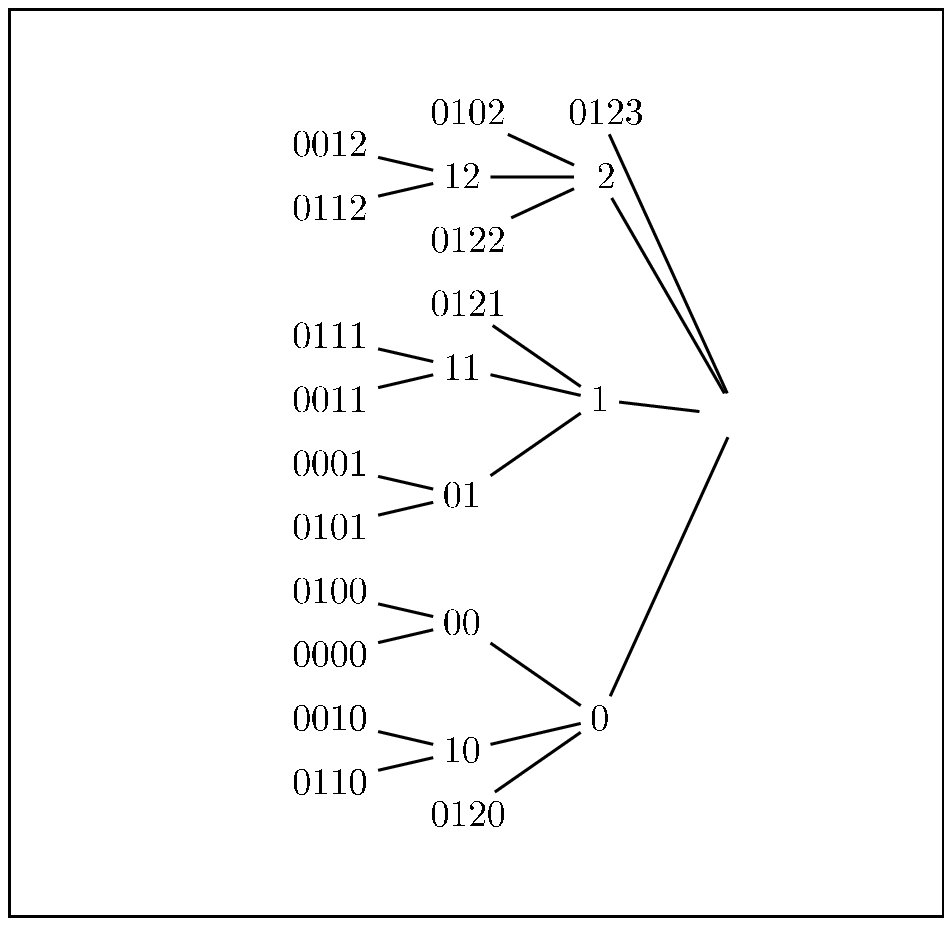}
\end{center}
\caption{The tree induced by the call {\tt Gen2}$(5,0,0,0)$, 
generating the list $\Atilde_4$.}
\label{fig:tree_suff}
\end{figure}

\clearpage
\section{Final remarks}

We conclude this paper by comparing for each of the sets 
$SE_ n$, $A_n$, $R_n$ and $S_n$ the prefix partitioned Gray codes 
induced by $\prec$ order and the suffix partitioned one induced
by $\prec_c$ order.
This can be done by comparing the Hamming distance 
between all pairs of successive sequences either in 
the worst case or in average.

Table \ref{Tb1} summarizes Theorems \ref{The_Gray_A_others},
\ref{The:ARtilde}, and \ref{The:Stilde}, and Proposition \ref{G_SE}, and 
gives the upper bound of the Hamming distance 
(that is, the worst case Hamming distance) for the two 
order relations. It shows that for the sets $SE_ n$ and $S_n$ these relations
have same performances, and for the sets $A_n$ and $R_n$, 
$\prec_c$ order induces more restrictive Gray codes.

\begin{table}[h]
\centering
\begin{tabular}{c|c|c}

\hline
Set	& $\prec$ (RGC)    & $\prec_c$ (Co-RGC)\\
 	&  order 	   &  order\\
\hline\hline
$SE_ n$ 	&	1	& 1\\
$A_n$		&	3	&2\\
$R_n$		&	3	&2\\
$S_n$		&	3	&3\\
\hline
\end{tabular}
\caption{\label{Tb1}
The bound of the Hamming distance between two successive 
sequences in $\prec$ and $\prec_c$ orders.}
\end{table}

For a list $\mathcal L$ of sequences, the {\it average Hamming distance} 
is defined as

$$
\frac{\sum d({\bsb s},{\bsb t})}{N-1},
$$
where the summation is taken over all $\bsb s$ in $\mathcal L$, except its 
last element, $\bsb t$ is the successor of $\bsb s$ in $\mathcal L$, $d$
is the Hamming distance, and $N$ the number of sequences in $\mathcal L$.

Surprisingly, despite $\prec_c$ order has same or
better performances in terms of worst case Hamming distance, 
if we consider the average Hamming distance,
numerical evidences show that $\prec$ order is `more optimal' than $\prec_c$ order on $A_n$, $R_n$ ($n\geq 5$),
and $S_n$ ($n\geq 6$). And this phenomenon strengthens for large $n$;
see Table \ref{Tb2}.

\begin{table}[h]
\centering
\begin{tabular}{|l|*{4}{c|}*{4}{c|}}
\hline
\multirow{3}{*}{$n$} & \multicolumn{4}{|c|}{ $\prec$ (RGC) order } &
\multicolumn{4}{|c|}{ $\prec_c$ (Co-RGC) order } \\

\cline{2-9}
  & \multirow{2}{*}{$\mathcal{SE}_n$} & \multirow{2}{*}{$\mathcal A_n$} & \multirow{2}{*}{$\mathcal R_n$} & \multirow{2}{*}{$\mathcal S_n$} & \multirow{2}{*}{$\SEtilde_n$} & \multirow{2}{*}{$\Atilde_n$} & \multirow{2}{*}{$\Rtilde_n$} & \multirow{2}{*}{$\Stilde_n$}\\
 & & & & & & & & \\
\hline\hline
4 & 1 &1.21  & 1.21  & 1.31  & 1 & 1.14  & 1.14 & 1.15 \\
5& 1 & 1.13 & 1.12 & 1.29 &  1 &  1.19 & 1.18 & 1.24  \\
6&1 & 1.09 & 1.07 & 1.27 &  1 &  1.23 & 1.20 & 1.31 \\
7&1 & 1.06 & 1.06 &  1.26 &  1 &  1.25 & 1.22 & 1.35\\
8&1 &  1.04& 1.04 &  1.25 &  1 &  1.26 & 1.23 &  1.37\\
9&1&  1.03& 1.03 &  1.24&  1 &  1.28 & 1.24 &  1.39\\
10& 1& 1.02 & 1.03  & 1.23  & 1 & 1.28  & 1.24 & 1.41 \\
\hline
\end{tabular}
\caption{\label{Tb2}The average Hamming distance for $\prec$ order and 
$\prec_c$ order.}
\end{table}

Algorithmically, $\prec_c$ order has the advantage that its corresponding generating
algorithm, {\tt Gen2}, is more appropriate to be parallelized
than its  $\prec$ order counterpart, {\tt Gen1}. 
Indeed, the main call of {\tt Gen2} produces $n$ recursive calls
(compare to two recursive calls produced by the main call of {\tt Gen1}),
and so we can have more parallelized computations; and this is
more suitable for large $n$. 
See Figure \ref{fig:tree_pref} and \ref{fig:tree_suff}
for examples of computational trees.

Finally, it will be of interest to 
explore order relation based Gray codes
for restricted growth sequences defined by statistics other than 
those considered in this paper.
In this vein we suggest the following
conjecture, checked by computer for $n\leq 10$, and  concerning descent sequences 
(defined similarly with ascent sequences in  Section 2).

\begin{Con}
The set of length $n$ descent sequences
listed in $\prec_c$ order is a $4$-adjacent Gray code.
\end{Con}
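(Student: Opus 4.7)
The plan is to follow the template developed in Section \ref{sec:coRGC} for the lists $\Atilde_n$, $\Rtilde_n$, and $\Stilde_n$, adapted to the descent statistic. Let $D_n$ denote the set of length $n$ descent sequences and $\Dtilde_n$ its listing in $\prec_c$ order. First, I would introduce the analogues $\alpha_D(\bsb b)$ and $\mu_D(\bsb b)$ of the defining set and minimum statistic value for an admissible suffix $\bsb b = b_{k+1}\ldots b_n$ in $D_n$. Since prepending $x$ to $b_{k+1}$ contributes one descent exactly when $x > b_{k+1}$, recurrences for $\mu_D$ and $\alpha_D$ analogous to those in Propositions \ref{eq:mu_Atilde} and \ref{pro:defseq_suff} can be derived, but with the role of ``ascent'' replaced by ``descent''; I would spell these out explicitly as the combinatorial foundation of the argument.

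The principal obstacle is that $\desc$ does not satisfy relation (\ref{prop2}), so Remark \ref{rem:coref} fails on $D_n$: a prefix ending at a maximal legal value is no longer forced to have the canonical form $012\ldots(k-1)$. Consequently, the pattern descriptions of $\last(\Dtilde_n\,|\,\bsb b)$ and $\first(\Dtilde_n\,|\,\bsb b)$ of Propositions \ref{pro:CorefLast} and \ref{pro:CorelSnLast} cannot be transplanted verbatim: the positions immediately to the left of the suffix $\bsb b$ have additional freedom, and this extra freedom is precisely what pushes the Hamming bound from $3$ to the conjectured $4$. I would therefore prove a pattern lemma distinguishing cases on the parity of $\sum_{i=k+1}^n b_i + (n-k)$ and on the relationships between $b_{k+1}$, $\mu_D(\bsb b)$, and $k-1$; the key intermediate claim is that although $s_k$, $s_{k-1}$, and $s_{k-2}$ may individually deviate from the canonical choice, $s_{k-3}$ is forced once the extremal choices at positions $k$, $k-1$, $k-2$ are fixed, by a short induction combining the $\alpha_D, \mu_D$ characterizations with the definition of $\prec_c$.

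Finally, following the strategy of Theorem \ref{The:Stilde}, I would conclude as follows: if $\bsb t = \tildesuc_D(\bsb s)$ and $k+1$ is the rightmost position where they differ, then the pattern lemma yields $s_i = t_i$ for all $i \leq k-3$, and a case-by-case comparison of the finitely many pattern combinations for $\last$ and $\first$ shows that the changes are confined to the window $\{k-2, k-1, k, k+1\}$, establishing the $4$-adjacent Gray code property. The case enumeration will be the most laborious step, since the set of admissible left-patterns is strictly larger than for $\Stilde_n$, but conceptually each case reduces to checking that ``$s_j \neq t_j$ forces $s_{j-1} \neq t_{j-1}$'' for at most three steps leftward of position $k+1$, a natural generalization of the parity-propagation argument used for $\Stilde_n$ and consistent with the numerical evidence for $n \leq 10$.
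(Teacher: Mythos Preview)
The statement you are attempting to prove is explicitly a \emph{conjecture} in the paper, not a theorem: the authors state only that it has been ``checked by computer for $n\leq 10$'' and leave it open. There is therefore no proof in the paper to compare your proposal against.

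What you have written is a reasonable research plan rather than a proof. You correctly identify the central obstruction: $\desc$ fails relation~(\ref{prop2}), so Remark~\ref{rem:coref} does not apply and the rigid prefix shape $012\ldots(k-1)$ is no longer forced. This is exactly why the authors declined to treat $\desc$ alongside $\asc$, $\m$, and $\lv$ in their general framework, and why they present the $4$-adjacent bound as a conjecture rather than a result. Your proposed route---deriving $\alpha_D$ and $\mu_D$ recurrences, then a multi-case pattern lemma for $\last(\Dtilde_n\,|\,\bsb b)$ and $\first(\Dtilde_n\,|\,\bsb b)$, then a window argument---mirrors the successful strategy for $\Stilde_n$, but the substantive work is entirely in the pattern lemma and the case enumeration, neither of which you have carried out. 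In particular, your ``key intermediate claim'' that $s_{k-3}$ is forced once extremal choices are made at positions $k$, $k-1$, $k-2$ is asserted without justification; without Remark~\ref{rem:coref} there is no obvious mechanism that terminates the leftward propagation after three steps rather than four or more, and this is precisely the heart of the conjecture. Until that claim is established by an explicit argument, the proposal remains a sketch of how a proof \emph{might} go, not a proof.
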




\clearpage
\section*{Appendix}

\begin{table}[!h]
\centering
\begin{tabular}{c|c|c|c}
\hline
Sequence    &   $S_5$       &$R_5$      &$A_5$\\
\hline\hline
00000   &   $\checkmark$&   $\checkmark$&$\checkmark$\\
00001   &   $\checkmark$&   $\checkmark$&$\checkmark$\\
00012   &   $\checkmark$&   $\checkmark$&$\checkmark$\\
00011   &   $\checkmark$&   $\checkmark$&$\checkmark$\\
00010   &   $\checkmark$&   $\checkmark$&$\checkmark$\\
00123   &   $\checkmark$&   $\checkmark$&$\checkmark$\\
00122   &   $\checkmark$&   $\checkmark$&$\checkmark$\\
00121   &   $\checkmark$&   $\checkmark$&$\checkmark$\\
00120   &   $\checkmark$&   $\checkmark$&$\checkmark$\\
00110   &   $\checkmark$&   $\checkmark$&$\checkmark$\\
00111   &   $\checkmark$&   $\checkmark$&$\checkmark$\\
00112   &   $\checkmark$&   $\checkmark$&$\checkmark$\\
00102   &           &   $\checkmark$&$\checkmark$\\
00101   &   $\checkmark$&   $\checkmark$&$\checkmark$\\
00100   &   $\checkmark$&   $\checkmark$&$\checkmark$\\
01230   &   $\checkmark$&   $\checkmark$&$\checkmark$\\
01231   &   $\checkmark$&   $\checkmark$&$\checkmark$\\
01232   &   $\checkmark$&   $\checkmark$&$\checkmark$\\
\hline
\end{tabular}
\quad
\begin{tabular}{c|c|c|c}
\hline
Sequence    &   $S_5$       &$R_5$      &$A_5$\\
\hline\hline
01233   &   $\checkmark$&   $\checkmark$&$\checkmark$\\
01234   &   $\checkmark$&   $\checkmark$&$\checkmark$\\
01223   &   $\checkmark$&   $\checkmark$&$\checkmark$\\
01222   &   $\checkmark$&   $\checkmark$&$\checkmark$\\
01221   &   $\checkmark$&   $\checkmark$&$\checkmark$\\
01220   &   $\checkmark$&   $\checkmark$&$\checkmark$\\
01210   &   $\checkmark$&   $\checkmark$&$\checkmark$\\
01211   &   $\checkmark$&   $\checkmark$&$\checkmark$\\
01212   &   $\checkmark$&   $\checkmark$&$\checkmark$\\
01213   &           &   $\checkmark$&$\checkmark$\\
01203   &           &   $\checkmark$&$\checkmark$\\
01202   &           &   $\checkmark$&$\checkmark$\\
01201   &   $\checkmark$&   $\checkmark$&$\checkmark$\\
01200   &   $\checkmark$&   $\checkmark$&$\checkmark$\\
01100   &   $\checkmark$&   $\checkmark$&$\checkmark$\\
01101   &   $\checkmark$&   $\checkmark$&$\checkmark$\\
01102   &            &  $\checkmark$&$\checkmark$\\
01112   &   $\checkmark$&   $\checkmark$&$\checkmark$\\
\hline
\end{tabular}
\quad
\begin{tabular}{c|c|c|c}
\hline
Sequence    &   $S_5$       &$R_5$      &$A_5$\\
\hline\hline
01111&$\checkmark$& $\checkmark$&$\checkmark$\\
01110&$\checkmark$& $\checkmark$&$\checkmark$\\
01120&$\checkmark$& $\checkmark$&$\checkmark$\\
01121&$\checkmark$& $\checkmark$&$\checkmark$\\
01122&$\checkmark$& $\checkmark$&$\checkmark$\\
01123&$\checkmark$& $\checkmark$&$\checkmark$\\
01023&        & $\checkmark$&$\checkmark$\\
01022&        & $\checkmark$&$\checkmark$\\
01021&        & $\checkmark$&$\checkmark$\\
01020&        & $\checkmark$&$\checkmark$\\
01010&$\checkmark$& $\checkmark$&$\checkmark$\\
01011&$\checkmark$& $\checkmark$&$\checkmark$\\
01012&$\checkmark$& $\checkmark$&$\checkmark$\\
01013&        &         &$\checkmark$\\
01002&        & $\checkmark$&$\checkmark$\\
01001&$\checkmark$& $\checkmark$&$\checkmark$\\
01000&$\checkmark$& $\checkmark$&$\checkmark$\\
    &         &          &       \\
\hline
\end{tabular}
\caption{The sets $S_5$, $R_5$, and $A_5$  listed in ${\prec}$ order.}
\label{tab:example_A5}
\end{table}

\begin{table}[!h]
\centering
\begin{tabular}{c|c|c|c}
\hline
Sequence    &   $S_5$       &$R_5$      &$A_5$\\
\hline\hline
01234&  $\checkmark$&   $\checkmark$&$\checkmark$\\
01233   &   $\checkmark$&   $\checkmark$&$\checkmark$\\
01023   &           &   $\checkmark$&$\checkmark$\\
00123   &   $\checkmark$&   $\checkmark$&$\checkmark$\\
01123   &   $\checkmark$&   $\checkmark$&$\checkmark$\\
01223   &   $\checkmark$&   $\checkmark$&$\checkmark$\\
01213   &           &   $\checkmark$&$\checkmark$\\
01013   &           &           &$\checkmark$\\
01203   &           &   $\checkmark$&$\checkmark$\\
01202   &           &   $\checkmark$&$\checkmark$\\
01102   &           &   $\checkmark$&$\checkmark$\\
00102   &           &   $\checkmark$&$\checkmark$\\
01002&          &   $\checkmark$&$\checkmark$\\
01012   &   $\checkmark$&   $\checkmark$&$\checkmark$\\
00012   &   $\checkmark$&   $\checkmark$&$\checkmark$\\
00112   &   $\checkmark$&   $\checkmark$&$\checkmark$\\
01112   &   $\checkmark$&   $\checkmark$&$\checkmark$\\
01212   &   $\checkmark$&   $\checkmark$&$\checkmark$\\
\hline
\end{tabular}
\quad
\begin{tabular}{c|c|c|c}
\hline
Sequence    &   $S_5$       &$R_5$      &$A_5$\\
\hline\hline
01222   &   $\checkmark$&   $\checkmark$&$\checkmark$\\
01122   &   $\checkmark$&   $\checkmark$&$\checkmark$\\
00122   &   $\checkmark$&   $\checkmark$&$\checkmark$\\
01022   &           &   $\checkmark$&$\checkmark$\\
01232   &   $\checkmark$&   $\checkmark$&$\checkmark$\\
01231   &   $\checkmark$&   $\checkmark$&$\checkmark$\\
01021   &           &   $\checkmark$&$\checkmark$\\
00121   &   $\checkmark$&   $\checkmark$&$\checkmark$\\
01121   &   $\checkmark$&   $\checkmark$&$\checkmark$\\
01221   &    $\checkmark$&  $\checkmark$&$\checkmark$\\
01211   &    $\checkmark$&  $\checkmark$&$\checkmark$\\
01111   &   $\checkmark$&   $\checkmark$&$\checkmark$\\
00111   &   $\checkmark$&   $\checkmark$&$\checkmark$\\
00011   &   $\checkmark$&   $\checkmark$&$\checkmark$\\
01011   &   $\checkmark$&   $\checkmark$&$\checkmark$\\
01001   &   $\checkmark$&   $\checkmark$&$\checkmark$\\
00001   &   $\checkmark$ &  $\checkmark$&$\checkmark$\\
00101   &   $\checkmark$&   $\checkmark$&$\checkmark$\\
\hline
\end{tabular}
\quad
\begin{tabular}{c|c|c|c}
\hline
Sequence    &   $S_5$       &$R_5$      &$A_5$\\
\hline\hline
01101&$\checkmark$& $\checkmark$&$\checkmark$\\
01201&$\checkmark$& $\checkmark$&$\checkmark$\\
01200&$\checkmark$& $\checkmark$&$\checkmark$\\
01100&$\checkmark$& $\checkmark$&$\checkmark$\\
00100&$\checkmark$& $\checkmark$&$\checkmark$\\
00000&$\checkmark$& $\checkmark$&$\checkmark$\\
01000&$\checkmark$ &    $\checkmark$&$\checkmark$\\
01010&$\checkmark$ &    $\checkmark$&$\checkmark$\\
00010&$\checkmark$ &    $\checkmark$&$\checkmark$\\
00110&$\checkmark$ &    $\checkmark$&$\checkmark$\\
01110&$\checkmark$ &    $\checkmark$&$\checkmark$\\
01210&$\checkmark$& $\checkmark$&$\checkmark$\\
01220&$\checkmark$& $\checkmark$&$\checkmark$\\
01120&$\checkmark$& $\checkmark$&$\checkmark$\\
00120&$\checkmark$ &    $\checkmark$&$\checkmark$\\
01020&        & $\checkmark$&$\checkmark$\\
01230&$\checkmark$& $\checkmark$&$\checkmark$\\
    &         &          &       \\
\hline
\end{tabular}
\caption{The sets $S_5$, $R_5$, and $A_5$ listed in ${\prec}_c$ order.}
\label{tab:example_Atilde5}
\end{table}



\begin{thebibliography}{10}


\bibitem{Gray} 
Gray, F. (1953) Pulse code communication, U.S. Patent 2632058 .

\bibitem{Er} 
Er, M.C. (1984)
On generating the $N$-ary reflected Gray code,
{\it IEEE Transaction on Computers}, {\bf 33}(8), 739--741.

\bibitem{Bern} 
Bernini, A., Grazzini, E.,  Pergola, E. and Pinzani, R. (2007) A general exhaustive generation algorithm for Gray structures,
{\it Acta Informatica}, 44({\bf 5}), 361-376.

\bibitem{Vaj_2010} 
Vajnovszki, V. (2010) Generating involutions, derangements, and relatives by ECO,
{\it DMTCS}, 12 ({\bf 1}), 109-122.

\bibitem{Rus_Will} 
Ruskey, F. and Williams, A. (2009) The coolest way to generate combinations,
{\it Discrete Mathematics}, {\bf 309}, 5305-5320.

\bibitem{Rus_Saw_Will} 
Ruskey, F.,  Sawada, J. and Williams, A. (2012)
Binary bubble languages, {\it Journal of Combinatorial Theory}, Ser. A {\bf 119(1)}, 155-169.

\bibitem{Kling} 
Klingsberg, P. (1981) A Gray code for compositions, {\it Journal of Algorithms}, 3 ({\bf 1}), 41–44.

\bibitem{Walsh_2000} 
Walsh, T. (2000) Loop-free sequencing of bounded integer compositions,
{\it Journal of Combinatorial Mathematics and Combinatorial Computing},
{\bf 33}, 323-345.

\bibitem{Bar_Vaj} 
Baril, J.-L. and Vajnovszki, V. (2005) 
Minimal change list for Lucas strings and some graph theoretic consequences, 
{\it Theoretical Computer Science}, {\bf 346}, 189-199. 

\bibitem{Vaj_F} 
Vajnovszki, V.  (2001)
A loopless generation of bitstrings without $p$ consecutive ones, 
{\it Discrete Mathematics and Theoretical Computer Science} - Springer, 227-240. 

\bibitem{Vaj_L} 
Vajnovszki, V. (2007)
Gray code order for Lyndon words, 
{\it Discrete Mathematics and Theoretical Computer Science}, {\bf 9} (2), 145-152. 

\bibitem{Vajnov2} 
Vajnovszki, V.  (2008) More restrictive Gray codes for necklaces and Lyndon words,
{\it Information Processing Letters}, {\bf 106}, 96-99.

\bibitem{VajnovVernay} 
Vajnovszki, V.  and Vernay, R. (2011)
Restricted compositions and permutations: from old to new Gray codes,
{\it Information Processin Letters}, {\bf 111}, 650-655.

\bibitem{Man_Vaj} 
Mansour, T.  and Vajnovszki, V. (2013)
Efficient generation of restricted growth words, {\it  Information Processing Letters}, {\bf 113}, 613-616.

\bibitem{RuskBar1} 
van Baronaigien, D.R.  and Ruskey, F. (1993)
Efficient generation of subsets with a given sum,
{\it JCMCC}, {\bf 14}, 87-96.

\bibitem{Rus} 
Ruskey, F.  {\it Combinatorial Generation}. Book in preparation.





\end{thebibliography}
\end{document}